\DeclareMathOperator{\crit}{crit}
\DeclareMathOperator{\id}{id}
\DeclareMathOperator{\gr}{gr}
\newcommand{\A}{\mathcal{A}}
\newcommand{\co}{\colon\thinspace}
\newcommand{\R}{\mathbb{R}}
\newcommand{\Z}{\mathbb{Z}}
\newcommand{\CP}{\mathbb{CP}{}^{2}}
\newcommand{\CPb}{\overline{\mathbb{CP}}{}^{2}}
\newcommand{\MYhref}[3][blue]{\href{#2}{\color{#1}{#3}}}
\newcommand{\s}{\mathcal{S}}
\newcommand{\7}{E(1)_{7_6}}
\newcommand{\1}{E(1)_{10_{133}}}
\newcommand{\x}{E(1)_K}
\newcommand{\y}{E(1)_{K'}}
\theoremstyle{definition}
\newtheorem{thm}{Theorem}[section]
\newtheorem{lemma}[thm]{Lemma}
\newtheorem{prop}[thm]{Proposition}
\newtheorem{df}[thm]{Definition}
\newtheorem{q}[thm]{Question}
\title{The salient crossings of a crown diagram}
\author[Jonathan D. Williams]{Jonathan D. Williams} 
\address{Department of Mathematical Sciences, Binghamton University}
\email{\MYhref{mailto:jdw.math@gmail.com}{jdw.math@gmail.com}}
\begin{document}
\begin{abstract}A crown diagram of a smooth, closed oriented 4-manifold can be thought of as the projection of a link in the product of a closed surface and the circle, with chords in the circle direction connecting the strands of each crossing. This paper uses a straightforward assignment of integers to these chords to show that the smooth structures on the topological 4-manifold underlying a pair of Fintushel-Stern knot surgery 4-manifolds which are known to have the same Seiberg-Witten invariant are not isotopic. A natural question is to determine if the argument may be strengthened to show these manifolds are not diffeomorphic.\end{abstract}
\maketitle
\section{Introduction}\label{introduction}
Let $M$ be a smooth, closed, oriented, connected 4-manifold, and let $\Sigma$ be a closed oriented surface of genus $g>2$. The construction of this paper begins with a certain type of smooth map $\alpha\co M\to S^2$ of which $\Sigma$ is a regular fiber, a \emph{crown map}, discussed in Section \ref{sddefs}. The surface $\Sigma$ is decorated with a cyclically ordered collection of simple closed curves $\Gamma=\{\gamma_n\}_{n\in\Z/k\Z}$ coming from $f$, which has a natural interpretation as a $k$-component link $L$ in $S^1\times\Sigma$ using the natural embedding of additive groups $\Z/k\Z\to S^1$, $n\mapsto\theta_n\in S^1$:
\[L=\left\{\{\theta_n\}\times\gamma_n\ |\ n\in\Z/k\Z\right\}.\]
The condition that each component of $L$ inhabits its own slice $\{\theta_n\}\times\Sigma$ is a rather useful one; such links are called \emph{cyclically stacked} links in this paper. Analogously, a \emph{linearly stacked} link lies in $[0,1]\times\Sigma$, with each component inhabiting its own subset $\{t\}\times\Sigma$. There are a few interesting sources of stacked links, such as Heegaard diagrams of closed 3-manifolds, yielding links which are linearly stacked with two levels of $g$-component links in $[0,1]\times\Sigma$ (or $2g$ levels, after a slight perturbation), and trisection diagrams of smooth closed 4-manifolds, yielding cyclically stacked links with three levels in $S^1\times\Sigma$. These links, however, do not present an obvious distinguished collection of crossings as crown diagrams do. For crown diagrams, this collection leads to a basic smooth 4-manifold invariant, the collection of so-called \emph{salient sequences} defined in Section \ref{chorddefs} and established as an invariant in Section \ref{invariance}. Section \ref{examples} uses this to show that distinguish the smooth structures of the knot surgery manifolds $\7$ and $\1$ cannot be isotopic in the sense of smoothing theory, where two smooth manifolds $A$ and $B$ are isotopic smoothings of the topological 4-manifold $X$ if there is a smooth manifold $Y$ and a homeomorphism to $X\times[0,1]\to Y$ such that the projection to $[0,1]$ is regular, and diffeomorphisms sending the two ends of $Y$ to $A$ and $B$. Supposing $\x$ and $\y$ are diffeomorphic, the author knows of no other way to distinguish the isotopy classes of their smooth structures.

\subsection*{Acknowledgements}The author would like to thank Jos\'{e} Rom\`{a}n Aranda Cuevas, Yasha Eliashberg, Bob Gompf, Rob Kirby, Robert Lewis, Robert Lipshitz, Cary Malkiewich, Jeff Nye, Tim Perutz, and Lisa Piccirillo for helpful conversations during the course of this work. The author would especially like to thank Umur \c{C}ift\c{c}i for his crucial contribution of the software EQMaker.

\section{Definitions}\label{definitions}
\subsection{Topological setting}\label{sddefs}
A \emph{Morse 2-function} is a smooth map $\alpha\co M\to S$, where $S$ is a smooth 2-manifold, such that each critical point of $f$ is locally modeled on a \emph{fold} critical point,\begin{equation}\label{foldeq}(x_1,x_2,x_3,x_4)\mapsto(x_1,x_2^2+x_3^2\pm x_4^2),\end{equation} or a \emph{cusp} critical point,\begin{equation}\label{cuspeq}(x_1,x_2,x_3,x_4)\mapsto(x_1,x_2^3-3x_1x_2+x_3^2\pm x_4^2).\end{equation}
The critical locus $\crit f$ forms a smooth 1-submanifold of $M$, stratified into a 1-dimensional stratum of fold points and a finite stratum of cusp points. Crown diagrams come from what this paper calls \emph{crown maps}, after a suggestion of R. Kirby (In previous work such as \cite{B,BH,W1,W2}, a crown diagram was called a \emph{surface diagram}). A crown map is a Morse 2-function $f$ for which $S=S^2$, the signs above are both chosen to be negative (that is, the critical points are \emph{indefinite} folds and cusps), and $\crit f$ is a single circle on which $f$ is injective (Figure \ref{crown}). A crown map induces a singular fiber bundle called a \emph{crown fibration} on $M$ as follows. The critical values of $f$ partition $S^2$ into a pair of disks, the \emph{higher-genus side} whose regular fibers are diffeomorphic to a closed oriented genus-$g$ surface $\Sigma$, and the \emph{lower-genus side}, which has closed oriented genus $g-1$ fibers. The two disks meet along the critical image of $f$, which is a circle with corners, where each corner is the image of a cusp point.
\begin{figure}
	\centering
	\begin{subfigure}{0.30\textwidth}
		\includegraphics{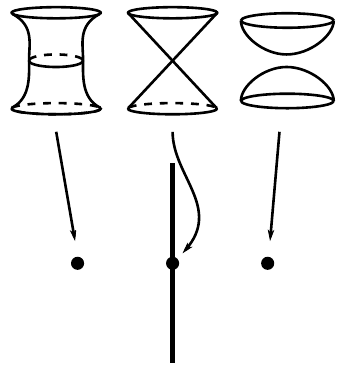}
		\caption{The image of a fold arc.}
		\label{fold}
	\end{subfigure}\qquad
	\begin{subfigure}{0.30\textwidth}
		\includegraphics{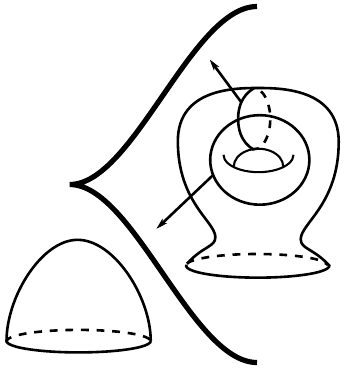}
		\caption{The image of a cusp point connecting two fold arcs.}
		\label{cusp}
	\end{subfigure}\qquad
	\begin{subfigure}{0.30\textwidth}
		\labellist
		\small\hair 2pt
		\pinlabel $v_3$ at 67 80
		\pinlabel $v_2$ at 82 53
		\pinlabel $v_1$ at 68 25
		\pinlabel $v_k$ at 34 25
		\pinlabel $v_4$ at 32 80
		\pinlabel $v_0$ at 42 53
		\endlabellist
		\includegraphics{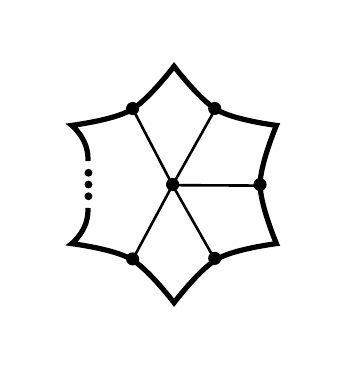}
		\caption{The critical image of a crown map, decorated with a graph $T$.}
		\label{crown}
	\end{subfigure}
	\caption{The critical image of an indefinite Morse 2-function. Arrows and circles in depicted reference fibers indicate the attaching circles corresponding to indicated fold arcs.}\label{crit}
\end{figure}

The local model for an indefinite fold point is $(h,\id_\R)\co\R^3\times\R\to\R\times\R$, where $h$ is the local model for a 3-dimensional Morse critical point of index 1 or 2 (in Figure \ref{fold}, it is index 1 with $h$ understood to increase from left to right along any horizontal arc). Such a Morse function induces a fibration of $R^3$ by the one- and two-sheeted hyperboloids $x^2+y^2-z^2=const$, and each one-sheeted hyperboloid $\{x^2+y^2-z^2=const>0\}$ intersects the $xy$ plane in a distinguished circle. In the image of a Morse 2-function, the region of regular values corresponding to $const>0$ is called the \emph{higher genus side} of the fold arc and the distinguished circle is a simple closed curve in the regular fiber called a \emph{vanishing cycle}. All of the vanishing cycles are depicted as simple closed curves in a fixed reference fiber $f^{-1}(p)$, measured by choosing a reference path from $p$ to the image of a fold arc, and a connection for the fibration, then marking the points in the regular fiber that flow  to the fold point. In general, the isotopy class of the vanishing cycle depends on these choices; however, the crown condition implies that the vanishing cycles in this paper are all non-separating and lie in a smooth, closed oriented surface $\Sigma$ of genus at least 3, and different choices of higher-genus reference fiber, connection and reference path yield isotopic vanishing cycles \cite[Section 4]{BH}. 

With this understood, a \emph{crown diagram} of the smooth, closed oriented 4-manifold $M$ is the oriented diffeomorphism class of a pair $(\Sigma,\Gamma)$, where $\Sigma$ is a closed, oriented surface of genus greater than 2 and $\Gamma$ is a collection of simple closed curves in $\Sigma$ which occurs as the counter-clockwise ordered sequence of vanishing cycles of a crown fibration whose oriented higher-genus fiber is $\Sigma$. A diffeomorphism of crown diagrams is required to preserve the cyclic ordering of $\Gamma$.  According to \cite[Corollary 2]{W1}, a crown diagram specifies $M$ up to orientation-preserving diffeomorphism.

The local model for an indefinite cusp is the same as the local model for the introduction via homotopy of a canceling pair of index-1 and index-2 Morse critical points in a 3-manifold (the homotopy parameter would increase from left to right in Figure \ref{cusp}). For this reason, the vanishing cycles of the two fold arcs which meet at a cusp point can be depicted to have exactly one transverse crossing in $\Sigma$. \begin{df}After canceling all crossings but one from each pair of consecutive vanishing cycles in a crown diagram, each unique transverse crossing between consecutive vanishing cycles is called a \emph{salient crossing}.\end{df}
At various points it will be useful to designate a vanishing cycle as the ``first" element of $\Gamma$, indexing with integers as $(\gamma_1,\ldots,\gamma_k)$, where the ordering of any three consecutive elements respects the cyclic order of $\Gamma$. Such a choice will be called a \emph{labeling} of $\Gamma$.

\begin{df}A \emph{homotopy class} of $(\Sigma,\Gamma)$ is the smooth homotopy class of any crown map whose diagram is $(\Sigma,\Gamma)$ for some choice of reference fiber, connection and reference paths.\end{df}

It may be possible (and would be interesting to find) that two non-homotopic crown maps $M\to S^2$ have the same crown diagram. In that case, both homotopy classes of maps would be homotopy classes of the diagram.

\begin{df}Two crown diagrams $(\Sigma_i,\Gamma_i)_{i=1,2}$ are \emph{isotopic} if there is an isotopy of individual vanishing cycles in $\Gamma_1$ resulting in a crown diagram which is diffeomorphic to $(\Sigma_2,\Gamma_2)$. \end{df}

\begin{df}A crown diagram $(\Sigma,\Gamma)$ is \emph{allowable} if $\Sigma\setminus\Gamma$ is a disjoint union of open disks whose closures are closed disks, and $|\gamma_i\cap\gamma_{i+1}|=1$ for every $i$.\end{df}
\begin{prop}Every crown diagram with four or more vanishing cycles is isotopic to an allowable crown diagram, and two isotopic allowable crown diagrams can be made diffeomorphic using an isotopy consisting of moves taken from Figure \ref{isotfig}, such that each move preserves allowability.\end{prop}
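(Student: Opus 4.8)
The plan is to treat the two clauses in turn. For the first, I would start with an arbitrary crown diagram $(\Sigma,\Gamma)$ with $k\ge 4$ vanishing cycles $\gamma_1,\dots,\gamma_k$ and arrange the two defining conditions of allowability separately. To get $|\gamma_i\cap\gamma_{i+1}|=1$ for all $i$, recall that consecutive vanishing cycles arise from the two fold arcs meeting at a cusp; near the cusp they differ by a narrow band and meet in a single transverse point with algebraic intersection $\pm1$, so their geometric intersection number is exactly $1$. I would then realize minimal position for all $k$ consecutive pairs at once --- e.g.\ by replacing the $\gamma_i$ with geodesic representatives for an auxiliary hyperbolic metric (and perturbing to remove triple points), or by an innermost-bigon induction on $\sum_i|\gamma_i\cap\gamma_{i+1}|$ --- so that in the resulting diagram every consecutive pair meets exactly once. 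That $\Sigma\setminus\Gamma$ is a disjoint union of open disks is the filling property of the vanishing cycles of a crown fibration, which I would take from \cite{BH} (it is the analogue of the fact that the vanishing cycles of a Lefschetz fibration over a disk fill the fiber when the total space is closed off). The hypothesis $k\ge 4$ is precisely what makes cellularity compatible with $|\gamma_i\cap\gamma_{i+1}|=1$: for $k\le 3$ every pair of vanishing cycles is consecutive, so there are at most three crossings, and an Euler characteristic count shows that three or fewer curves meeting pairwise in at most one point cannot have cellular complement in a surface of genus at least $3$.

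To upgrade ``open disks'' to ``open disks whose closures are closed disks'' I would remove, by small local finger-move isotopies, the finitely many places where the boundary walk of a complementary region fails to embed in $\Sigma$; each such correction trades a pinched region for honest disks and simplifies the dual cell structure while leaving intersection numbers untouched, so the process terminates and produces an allowable diagram. For the second clause, suppose $(\Sigma_1,\Gamma_1)$ and $(\Sigma_2,\Gamma_2)$ are allowable and isotopic; I would fix an isotopy realizing this and make it generic as a one-parameter family of multicurves. Since an isotopy keeps each individual $\gamma_i$ embedded, a generic such family is a finite concatenation of ambient isotopies preserving the combinatorial type, together with Reidemeister-II moves (birth or death of a bigon between two distinct curves of $\Gamma$) and Reidemeister-III triangle moves among three curves. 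Up to relabeling these are exactly the moves drawn in Figure~\ref{isotfig}; the only obstruction is that a naive bigon birth or death can momentarily violate $|\gamma_i\cap\gamma_{i+1}|=1$ or merge two disk regions into a non-disk, so the generic isotopy need not stay inside the allowable locus.

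The heart of the argument --- and where I expect the real work to be --- is to show that whenever the generic isotopy leaves the allowable locus, the offending segment can be replaced by a detour through allowable diagrams assembled from the moves of Figure~\ref{isotfig}. I would do this by a local analysis: at each forbidden Reidemeister event there are only finitely many combinatorial pictures, and in each one I would exhibit an explicit detour --- typically creating a canceling bigon elsewhere, sliding the strand across an adjacent complementary disk, and then cancelling --- that effects the same net change while keeping $\Gamma$ cellular and all consecutive intersection numbers equal to $1$; the availability of such detours rests on the cellularity of the diagram, which always supplies a disk to slide across. Checking that the finite list of moves in Figure~\ref{isotfig} really does route around every allowability-violating elementary move is the main obstacle; by comparison, the generic-family decomposition and the classification of the elementary moves themselves are routine.
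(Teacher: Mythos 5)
Your treatment of the first clause contains a genuine gap: you take the cellularity of $\Sigma\setminus\Gamma$ to be an automatic ``filling property'' of the vanishing cycles of a crown fibration, to be cited from \cite{BH}. That is not justified and is not how the paper proceeds; nothing in the definition of a crown diagram forces the vanishing cycles to fill $\Sigma$ (consecutive cycles need only meet once, and the whole collection could a priori sit inside a proper subsurface, leaving complementary regions of positive genus). The paper's proof is precisely the mechanism for \emph{creating} cellularity: apply finger moves (point-pushing maps) to individual vanishing cycles, which introduces new crossings between non-consecutive cycles and subdivides the complementary regions until only disks with embedded closures remain. Your proposal actually pushes in the opposite direction --- realizing minimal position for all pairs (geodesic representatives) minimizes intersections and therefore tends to destroy cellularity rather than produce it; the finger moves you invoke only at the very end, to fix non-embedded disk closures, are in fact the engine of the whole argument. (Your Euler-characteristic remark about why $k\ge 4$ is needed is correct but peripheral.)

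For the second clause your plan is essentially the paper's: perturb the isotopy into a sequence of the moves of Figure \ref{isotfig} and repair the moments where allowability fails by inserting auxiliary local modifications --- the paper phrases these repairs as temporary finger moves added at various stages of the isotopy, while you phrase them as detours across complementary disks. Neither version carries out the case analysis in detail, and you are candid that this is where the real work lies, so I would not count that against you beyond noting that the paper is equally terse there. The substantive correction needed is to the first clause: replace ``minimal position plus filling from \cite{BH}'' with ``finger moves to subdivide until allowable.''
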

\begin{proof}One may apply what are generally called \emph{finger moves} or point-pushing maps to individual vanishing cycles to subdivide regions until $(\Sigma,\Gamma)$ is allowable, introducing crossings between non-consecutive vanishing cycles. To convert into a different yet isotopic allowable diagram, one may add temporary finger moves at various stages of the isotopy as needed to preserve allowability at every stage.\end{proof}

\subsection{The chords of a crown diagram}\label{chorddefs}
The cyclic ordering of $\Gamma$ gives data reminiscent of crossings in knot diagrams as follows. For $i,j\in\Z/k\Z$, an intersection $x\in\gamma_i\cap\gamma_j$ is associated with the two arcs $[\theta_i,\theta_j]\times\{x\}$ and $[\theta_j,\theta_i]\times\{x\}$ whose union is $S^1\times\{x\}$. These are both oriented as subspaces of $S^1\times\Sigma$, connecting the circles $\{\theta_i\}\times\gamma_i$ and $\{\theta_j\}\times\gamma_j$. These arcs are called the \emph{chords} of $x$.

Any chord $[\theta_i,\theta_j]\times\{x\}$ singles out the sublink of $L$ contained in $[\theta_i,\theta_j]\times\Sigma$ and the complimentary chord $[\theta_j,\theta_i]\times\{x\}$ singles out the sublink of $L$ contained in $[\theta_j,\theta_i]\times\Sigma$, because the interiors of the two intervals are disjoint positively oriented arcs in $S^1$. The crossing $x$ appears in both sublinks, and if $\gamma_i$ passes over $\gamma_j$ in one, then $\gamma_i$ passes under $\gamma_j$ in the other
. In either case, given a chord $a$, one may choose a labeling of $\Gamma$ so that the under-crossing vanishing cycle at $x$ is $\gamma_1$. Then the over-crossing vanishing cycle would be $\gamma_n$ for some $n\in\{2,\ldots,k\}$ and that corresponding sublink is \[L_a=\bigcup\limits_{i=1}^n\{\theta_i\}\times\gamma_i\subset[\theta_1,\theta_n]\times\Sigma.\]For each labeling, the two strands of each crossing in the crown diagram necessarily come from an intersection of two distinct circles, so this paper adopts the visual convention that, having chosen a crossing and a labeling, the under-crossing strand is the one with the lesser subscript. Using this convention, the corners of the disks adjacent to each crossing in a sublink $L_a$ can be given signs as in Figure \ref{crossing1}.

\begin{figure}\centering
	\labellist
	\small\hair 2pt
	\pinlabel $-$ at 47 47
	\pinlabel $-$ at 28 28
	\pinlabel $+$ at 47 28
	\pinlabel $+$ at 28 47
	\endlabellist
	\includegraphics{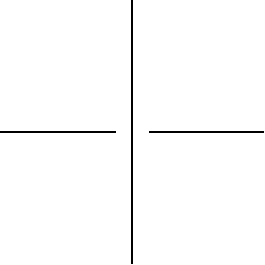}
	\caption{The signs of the quadrants near a crossing of vanishing cycles in a crown diagram are defined as in this figure only after choosing a labeling for $\Gamma$.}
	\label{crossing1}
\end{figure}
Since $\gamma_i$ intersects $\gamma_{i+1}$ at a unique point for each $i\in\Z/k\Z$, there are $k$ distinguished chords whose associated sublinks each have two components $\gamma_i,\gamma_{i+1}$ and one crossing, which is a salient crossing. These chords are the \emph{salient chords} of $\A(\Sigma,\Gamma)$. 

Suppose $(\Sigma,\Gamma)$ is allowable, choose a labeling of $\Gamma$ and let $U$ be the collection of disks forming the disjoint union $\Sigma\setminus\cup_i\gamma_i=\sqcup D^2$. For any crossing there is a unique chord which does not intersect $[\theta_k,\theta_1]\times\Sigma$; in other words, it is the chord which is contained in the link diagram whose over-crossing strands are designated by the labeling. Thus, a labeling gives signs to all the corners in the diagram. Given a labeling and a disk $u\in U$, denote by $Q_+(u)$ the set of chords corresponding to positive corners of $u$ and denote by $Q_-(u)$ the set of chords corresponding to negative corners of $u$. Note that allowability implies $Q_+(u)$ and $Q_-(u)$ are disjoint. Assign a variable $\gr(a)$ to each chord $a$ and impose the following \emph{grading equations} for every labeling of $\Gamma$ and for every region $u$:
\begin{equation}\sum\limits_{a\in Q_+(u)}\gr(a)-\sum\limits_{a\in Q_-(u)}\gr(a)=|Q_+(u)|-|Q_-(u)|.\label{dimformula}\end{equation}
The system of equations given by Equation \ref{dimformula} will be referred to as the \emph{grading system}. It is defined as a collection of equations associated to disks, but Equation \ref{dimformula} also makes sense for any region $u\subseteq\Sigma$ whose boundary is a union of arcs of vanishing cycles that meet at crossings. The right hand side of Equation \ref{dimformula} assigns an integer to each $u\in U$, and it is not hard to check that this assignment is additive in the sense that the grading equation for a union of disks is the sum of the grading equations for the disks.

It is now possible to define the main invariant of the paper.
\begin{df}A \emph{salient sequence} of $(\Sigma,\Gamma)$ is a choice of values of $gr(x_i)$ for the salient chords $x_1,\ldots,x_k$ of the diagram, obtained from a solution of the collection of all the grading equations for the disks in the complement of the vanishing cycles. Let $\s(\Sigma,\Gamma)$ denote the set of salient sequences for the diagram $(\Sigma,\Gamma)$.\end{df}
The grading system is defined as a system of equations over the integers, but it may only have solutions in a finite cyclic group, and this solution is not necessarily unique.
\section{Invariance}\label{invariance}
This section contains the proof that $\s(\Sigma,\Gamma)$ is an invariant of the collection of crown diagrams of $M$ with any fixed genus and homotopy class.
\begin{df}For two closed curves $a,b$ in $\Sigma$, a \emph{slide} of $a$ over $b$ is the result of replacing $a$ with the connect sum $a\# b$ using some choice of connecting arc, which is a smooth embedded curve from one point of $a$ to one point of $b$ whose interior is disjoint from $a$ and $b$. In this situation, $a$ is the \emph{nonstationary} circle and $b$ is the \emph{stationary} circle.\end{df}

\begin{thm}\label{invthm}If two allowable crown diagrams $(\Sigma_i,\Gamma_i)_{i=1,2}$ of $M$ have the same genus and a common homotopy class, then $\s(\Sigma_1,\Gamma_1)=\s(\Sigma_2,\Gamma_2)$.\end{thm}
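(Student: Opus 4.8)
The plan is to establish Theorem~\ref{invthm} by reducing it to a move-by-move check. First I would invoke the uniqueness theorem for crown diagrams, \cite[Corollary~2]{W1}, together with the move calculus for such diagrams (as developed in \cite{W1,W2}): two allowable crown diagrams of $M$ with the same genus and the same homotopy class are connected by a finite sequence of moves of two types: (i) isotopies of individual vanishing cycles, which by the Proposition above may be taken through allowable diagrams using only the moves of Figure~\ref{isotfig}; and (ii) slides of a vanishing cycle $\gamma_i$ over a cyclically adjacent vanishing cycle $\gamma_{i\pm1}$, each followed if necessary by a finger move to restore allowability. It is exactly the hypotheses that the genus and the homotopy class agree that make this list complete, i.e.\ that rule out genus-changing (wrinkling/stabilization) moves. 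Since $\s(\Sigma,\Gamma)$ is the projection, onto the coordinates indexed by the $k$ salient chords, of the solution set of the grading system (Equation~\ref{dimformula}) ranging over all labelings, it suffices to show that this projected set is unchanged under each move of type (i) and (ii).

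For the moves of type~(i), I expect $\s$ to be unchanged on the nose, with the verification a short finite case analysis. Each such move is supported in a disk and alters only finitely many regions $u$ and finitely many chords; because allowability is preserved throughout, every crossing that is born or dies lies between \emph{non-consecutive} vanishing cycles and so is never salient. The essential point is the additivity of the right-hand side of Equation~\ref{dimformula} recorded in Section~\ref{chorddefs}: when a region is subdivided (as in a finger move), the grading equation of the old region is the sum of the grading equations of the pieces, so passing to the refined diagram imposes no new relation beyond expressing the gradings of the new chords in terms of the old variables. Running through the local pictures of Figure~\ref{isotfig} one checks in each case that the new grading system, after eliminating the new-chord variables, has the same solution set as the old one in the salient-chord coordinates.

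The hard part will be (ii), the slide $\gamma_i\rightsquigarrow\gamma_i\#\gamma_{i+1}$ (the case $\gamma_{i-1}$ being symmetric). This move changes the set of vanishing cycles, creates new crossings of $\gamma_i\#\gamma_{i+1}$ with $\gamma_{i+1}$ and with $\gamma_{i-1}$, changes the signs of the corners around the slid crossings as in Figure~\ref{crossing1}, and alters which chords are the salient ones near index $i$. The plan is to produce an explicit bijection between the solution sets of the two grading systems (over all labelings) carrying the salient-chord coordinate functions to the salient-chord coordinate functions up to a fixed integer shift determined by the slide, and then to show that shift is zero; equality of sets follows. Concretely I would: express each grading equation of the new diagram as an integer combination of those of the old diagram together with the defining equations of the newly created chords; match the new salient chord near the slide with the old $x_i$ under this dictionary; and confirm the remaining salient chords are untouched. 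Before grinding through that bookkeeping, however, I would try the cleaner route of reinterpreting the right-hand side $|Q_+(u)|-|Q_-(u)|$ of Equation~\ref{dimformula} as the evaluation on $u$ of a Maslov/Euler-type quantity attached to the vanishing cycles as framed curves, intrinsic to $M$ and the homotopy class; then $\s$ is the image under projection to salient chords of the affine solution space of a fixed coboundary-type equation, and a slide manifestly acts by a change of generators preserving that image. The genuine obstacle, by either route, is the careful bookkeeping of corner signs and the auxiliary finger moves, and checking that the resulting correspondence is a bijection rather than merely a one-sided containment of solution sets.
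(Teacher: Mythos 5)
Your overall strategy --- reduce to a finite list of moves relating any two same-genus, same-homotopy-class allowable crown diagrams, then check that the projection of the solution set of the grading system onto the salient chords is unchanged move by move --- is the same as the paper's. But there are two concrete gaps. First, the move calculus you invoke is not the one that is actually available. The correct input is \cite[Theorem 1.1]{W2} (isotopy plus stabilization, handleslide, multislide, shift) together with \cite[Theorem 4.9]{W3}, which removes stabilization when the genera agree; \cite[Corollary 2]{W1} only says the diagram determines $M$ and supplies no move set. In particular the moves are not arbitrary slides of $\gamma_i$ over $\gamma_{i\pm1}$: handleslide and multislide are realized by applying a diffeomorphism to a block of consecutive nonstationary vanishing cycles, and shift is realized by a different sequence of slides. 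This matters because the three moves interact with the set of salient crossings in genuinely different ways, and the proof has to split accordingly (Lemmas \ref{slides} and \ref{shift} in the paper).

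Second, and more seriously, your type-(i) claim that every crossing born or dying lies between non-consecutive vanishing cycles and so is never salient fails in exactly the case that requires the most work. After the sequence of slides realizing a shift move, the intermediate diagram need not be allowable: there can be several crossings between consecutive vanishing cycles $\gamma_i$ and $\gamma_{i+1}$, and restoring allowability by Reidemeister-II$^{-1}$ moves can cancel the \emph{original} salient crossing $x$, leaving a different crossing $x'$ as the new salient one. At that point $\s$ is preserved only because the grading equations for the last remaining pair of bigons sharing a corner force $\gr(x)=\gr(x')$ --- this is the content of Lemma \ref{shift} together with the bigon observation in the proof of Lemma \ref{isotopy}, and it is the one step at which the invariant could actually change. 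Your proposal defers precisely this bookkeeping (``match the new salient chord near the slide with the old $x_i$'') without the argument that makes it work, and the proposed Maslov/Euler reinterpretation of the right-hand side of Equation \ref{dimformula} is neither carried out nor needed. The rest of your plan (additivity of the grading equations, elimination of the new-chord variables, checking both containments of solution sets) does match what the paper does for the Reidemeister moves and for a single slide.
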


\begin{lemma}\label{isotopy}If two crown diagrams $(\Sigma_i,\Gamma_i)_{i=1,2}$ of $M$ are isotopic and allowable, then $\s(\Sigma_1,\Gamma_1)=\s(\Sigma_2,\Gamma_2)$.\end{lemma}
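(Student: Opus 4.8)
The plan is to reduce to a single elementary move and verify the invariance move by move. By the Proposition in Section~\ref{definitions}, an isotopy between two allowable crown diagrams of $M$ is realized by a finite sequence of moves from Figure~\ref{isotfig}, each carrying one allowable diagram to another, so it suffices to show that each such move leaves the set of salient sequences unchanged. Along the sequence the individual vanishing cycles keep their identities, so the $k$ labelings of $\Gamma$ and the $k$ consecutive pairs $\{\gamma_i,\gamma_{i+1}\}$ persist; allowability forces $|\gamma_i\cap\gamma_{i+1}|=1$ at every stage, so each such pair still carries a unique salient crossing $x_i$, and the salient chords stay canonically indexed by $\Z/k\Z$. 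It therefore makes sense to ask that the set of tuples $(\gr(x_1),\dots,\gr(x_k))$ obtained from solutions of the grading system be the same before and after each move.

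The moves come in three kinds: a planar isotopy sliding a crossing inside one region; a third-Reidemeister-type move pushing an arc of one vanishing cycle across a crossing of two others; and a finger (second-Reidemeister-type) move pushing an arc of $\gamma_i$ across an arc of a non-consecutive $\gamma_j$, creating or destroying a bigon. A finger move is never performed between consecutive vanishing cycles, as that would create a second and third point of $\gamma_i\cap\gamma_{i+1}$; the subdivisions needed to keep every stage allowable are supplied by temporary finger moves on non-consecutive cycles, as in the proof of that Proposition. A planar slide changes no region, crossing, chord, or corner sign for any labeling, hence leaves the grading system and $\s$ untouched. A third-Reidemeister move preserves the three crossings and the pairs meeting at them, hence the chords and the variables; only the partition of a disk neighborhood of the triangle into regions changes, and a direct inspection of Equation \ref{dimformula}, using the additivity of its right-hand side noted after that equation, shows that for each labeling the subsystem of grading equations supported near the triangle has the same solution set before and after --- exactly the standard verification that Maslov-type gradings are invariant under the third Reidemeister move. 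Since nothing else is affected, $\s$ is preserved.

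The substantive case is the finger move: pushing an arc of $\gamma_i$ across an arc of $\gamma_j$ with $j\ne i\pm1$ creates two new crossings $p,q$ on that pair of cycles and a small bigon $b$ with corners $p,q$, modifying only a bounded collection of regions near the finger. For each labeling this adds exactly two variables, $\gr$ of the active chords at $p$ and at $q$. The grading equation of every region away from the finger is literally unchanged, and by additivity of the right-hand side of Equation \ref{dimformula} the equations of the affected regions reorganize into the old equations of those regions together with the single new equation of the bigon. Hence (i) any solution of the enlarged system restricts to a solution of the original one, the two new variables cancelling in that reorganization by the combinatorics of a bigon; and (ii) any solution $v$ of the original system extends: the bigon equation and one equation of a modified region determine explicit values for the two new variables in terms of $v$, and one verifies --- using that $v$ satisfies the old equations of the modified regions --- that the resulting assignment solves the whole enlarged system. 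Neither operation touches any $\gr(x_m)$, the new crossings lying on the non-consecutive pair $\{\gamma_i,\gamma_j\}$, so the achievable salient tuples agree; bigon removal is this argument run backwards. Composing over the whole sequence of moves yields $\s(\Sigma_1,\Gamma_1)=\s(\Sigma_2,\Gamma_2)$.

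The point demanding care is step (ii): one must check that the values assigned to $\gr(p)$ and $\gr(q)$ are simultaneously compatible with the grading equations of all the regions abutting the finger, and that the verification goes through uniformly over all $k$ labelings, for which the same two new crossings acquire different corner signs under the convention of Figure~\ref{crossing1}. Controlling that interaction between the labelings and the sign convention is the delicate part, and it is where one also sees that the enlarged and original systems have solutions over the same coefficient ring --- the integers, or the same finite cyclic quotient of them --- so that the two sides of the claimed equality are subsets of one ambient group.
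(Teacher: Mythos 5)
Your overall strategy---decompose the isotopy into the elementary moves of Figure \ref{isotfig} and check that each move replaces the grading system by an equivalent one without disturbing the salient chords---is the same as the paper's. The problem is that at the two places where something actually has to be computed, you state what must be verified instead of verifying it. For the Reidemeister-II (finger) move, your step (ii) says the bigon equation together with one modified region equation ``determine explicit values for the two new variables'' and that ``one verifies'' the resulting assignment solves the enlarged system; your closing paragraph then concedes that the simultaneous compatibility of those values with \emph{all} regions abutting the finger, uniformly over all $k$ labelings (under which the two new crossings acquire different corner signs), ``is the delicate part.'' That delicate part is the proof. The paper closes it by exhibiting explicit linear relations among the region equations (in the notation of Figure \ref{r2}, $E_1=E_4+E_6$, $E_2=E_5-E_6+E_7$, $E_3=E_6+E_8$), which show in one direction that every old equation is a consequence of the new ones, and in the other that the two new chord variables can be eliminated by substitution from every equation except the bigon's, so that the enlarged system is the old system together with equations in variables appearing nowhere else. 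Without some such explicit elimination, both of your inclusions between solution sets are asserted rather than proved.

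For the Reidemeister-III move you appeal to ``the standard verification that Maslov-type gradings are invariant under the third Reidemeister move.'' That verification has a well-known exceptional configuration: a central triangle all three of whose corners carry the same sign, for which the count on the right-hand side of Equation \ref{dimformula} fails to match across the move. The paper must, and does, rule this configuration out: since the corner signs are only defined after choosing a labeling, and a labeling linearly orders the three strands by over/under, the all-positive and all-negative triangles never occur. Your argument is silent on this point, and invoking the ``standard'' check implicitly assumes it. A smaller remark: your exclusion of finger moves between consecutive vanishing cycles is legitimate for this lemma, since the Proposition lets every intermediate stage remain allowable so that $|\gamma_i\cap\gamma_{i+1}|=1$ throughout; note, however, that the paper deliberately treats that case here anyway (the shared pair of bigons forcing three $\gr$ variables to be equal) because it is needed later in the proof of Lemma \ref{shift}.
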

\begin{proof}
For any labeling, any isotopy sending $(\Sigma_1,\Gamma_1)$ to $(\Sigma_2,\Gamma_2)$ can be slightly perturbed to occur as a sequence of Reidemeister moves, chosen from Figure \ref{isotfig}.
\begin{figure}
	\centering
	\begin{subfigure}{0.3\textwidth}
		\labellist
		\small\hair 2pt
		\pinlabel {II {\scalebox{2.5}{$\downarrow$}} {\scalebox{2.5}{$\uparrow$}} II$^{-1}$} at 63 70
		\pinlabel $\bullet\ p_1$ at 63 140
		\pinlabel $\bullet\ p_2$ at 63 115
		\pinlabel $\bullet\ p_3$ at 63 90
		\pinlabel $\bullet\ p_4$ at 63 45
		\pinlabel $\bullet\ p_5$ at 22 26
		\pinlabel $\bullet\ p_6$ at 63 25
		\pinlabel $\bullet\ p_7$ at 102 26
		\pinlabel $\bullet\ p_8$ at 63 5
		\pinlabel $a$ at 43 26
		\pinlabel $b$ at 77 26
		\endlabellist
	\includegraphics[width=\textwidth]{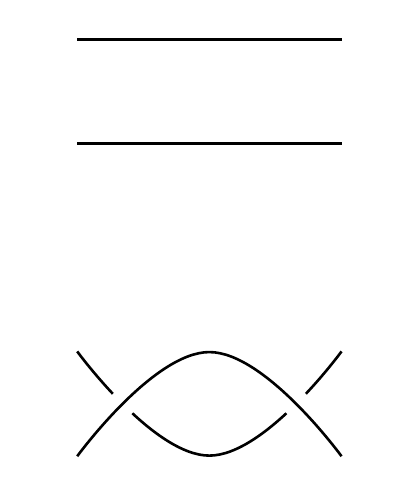}
	\caption{Reidemeister-II.}
	\label{r2}
	\end{subfigure}
	\begin{subfigure}{0.3\textwidth}
		\labellist
		\small\hair 2pt
		\pinlabel {{\scalebox{2.5}{$\downarrow$}}} at 60 70
		\pinlabel $d$ at 58 135
		\pinlabel $c$ at 80 108
		\pinlabel $e$ at 39 108
		\pinlabel $d$ at 63 8
		\pinlabel $e$ at 80 35
		\pinlabel $c$ at 42 35
		\pinlabel $\bullet\ p_t$ at 62 107
		\pinlabel $\bullet\ p_{9}$ at 85 120
		\pinlabel $\bullet\ p_{10}$ at 95 92
		\pinlabel $\bullet\ p_{11}$ at 95 48
		\pinlabel $\bullet\ p_{12}$ at 85 22
		\pinlabel $\bullet\ p_t$ at 62 32
		\endlabellist
	\includegraphics[width=\textwidth]{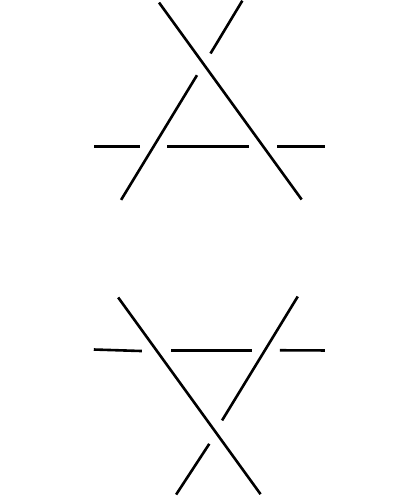}
	\caption{Reidemeister-IIIa.}
	\label{r3a}
	\end{subfigure}
	\begin{subfigure}{0.3\textwidth}
		\labellist
		\small\hair 2pt
		\pinlabel {{\scalebox{2.5}{$\downarrow$}}} at 60 70
		\endlabellist	
	\includegraphics[width=\textwidth]{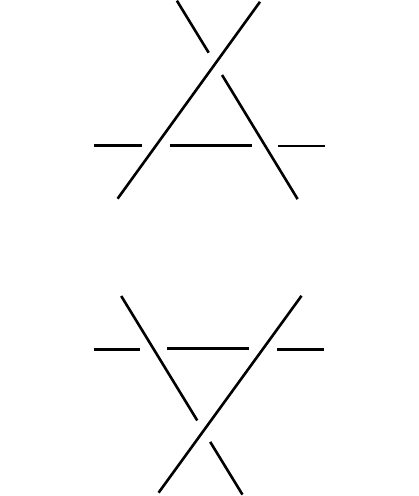}
	\caption{Reidemeister-IIIb.}
	\label{r3b}
	\end{subfigure}
	\caption{Isotopies of $L$ corresponding to isotopies of a crown diagram.}\label{isotfig}
\end{figure}
Invariance of $\s$ comes from examining the grading equations for those disks forming $\Sigma\setminus\bigcup_i\gamma_i$ that are affected by such an isotopy, with an arbitrary fixed labeling. Except for the corners $a$ and $b$ in Figure \ref{r2}, the crossings have unchanging labels which follow along with the isotopy, and outside of the pictured regions the disks, their corners, and their grading equations are unchanged. In the following arguments, let $E_i$ denote the grading equation for the disk containing the point $p_i$ in Figure \ref{isotfig}, for any labeling that yields the pictured crossing data. Since relabeling yields the same pictures, possibly rotated, the following arguments are sufficient.

For the Reidemeister-II move in Figure \ref{r2}, $E_1=E_4+E_6$, $E_2=E_5-E_6+E_7$, and $E_3=E_6+E_8$, while all other equations for $(\Sigma_1,\Gamma_1)$ are also equations for $(\Sigma_2,\Gamma_2)$. For this reason, $\s(\Sigma_2,\Gamma_2)\subseteq\s(\Sigma_1,\Gamma_1)$. On the other hand, the chords $c_a$ and $c_b$ corresponding to the crossings $a$ and $b$ in the grading system for $(\Sigma_2,\Gamma_2)$ can be removed from all equations except for that of $u_6$ by rearranging the sums above for $u_4$ and $u_8$, and by substitution for $u_5$ and $u_7$. For $u_5$, for example, the $-gr(c_a)$ appearing in the equation for $u_5$ can be replaced by $gr(b)$ using the equation for $u_6$, and then $gr(b)$ can be replaced by the result of solving for $gr(b)$ in the equation for $u_7$, yielding the equation for $u_2$. The argument for $u_7$ runs similarly. For this reason, appending the two grading equations in four variables coming from $u_6$ to the first grading system results in a grading system which is equivalent to the second, and since the variables for those two extra equations do not appear elsewhere in the first system, $\s(\Sigma_1,\Gamma_1)\subseteq\s(\Sigma_2,\Gamma_2)$. 

For a Reidemeister-II$^{-1}$ move between consecutive vanishing cycles, it may be the case that one of the crossings that disappear is a salient crossing. This cannot happen if both diagrams are allowable, but it is possible that a sequence of slides results in a diagram which is not allowable, but becomes an allowable diagram after canceling some crossings between consecutive vanishing cycles. In that case, the relevant figure is a pair of bigons which share a corner, coming from three crossings between two vanishing cycles. In this case the two grading equations for any chosen labeling set the three associated $gr$ variables to be equal.

For the Reidemeister-III moves, referring to Figure \ref{r3a}, the central triangles before and after a Reidemeister-IIIa move have the same equation $E_t$. The regions opposite a corner of the triangle have equations analogous to $E_9=E_{11}-E_t$, and regions opposite a side of the triangle have equations analogous to $E_{10}=E_{12}+E_t$. There are also analogous equations for the disks in Figure \ref{r3b}. Since the reverse of a Reidemeister-IIIa move is another Reidemeister-IIIa move, and the same is true of a Reidemeister-IIIb move, the systems of grading equations before and after a Reidemeister-IIIa or Reidemeister-IIIb move are equivalent. Since the grading equations are only defined after choosing a labeling, the cases in which the central triangle has all positive or all negative corners do not occur.\end{proof}
According to \cite[Theorem 4.9]{W3}, if two crown diagrams of $M$ have the same genus and homotopy class, then they are related by the so-called \emph{genus-preserving moves}, which are handleslide, multislide, and shift. According to \cite[Theorem 4.3]{W3}, these moves are all realized as sequences of slides of a certain form determined by which move is applied. The following lemmas show that these sequences of slides leave the salient sequence $\s$ unchanged.
\begin{lemma}\label{slides}If two crown diagrams $(\Sigma_i,\Gamma_i)_{i=1,2}$ of $M$ are related by a multislide or a handleslide move, then $\s(\Sigma_1,\Gamma_1)=\s(\Sigma_2,\Gamma_2)$.\end{lemma}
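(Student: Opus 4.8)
The plan is to reduce the lemma, via \cite[Theorem 4.3]{W3}, to the statement that $\s$ is unchanged by a single slide of one vanishing cycle over another, and then to read Equation \ref{dimformula} off the bounded collection of complementary regions that such a slide disturbs. Since an intermediate diagram in the prescribed sequence of slides may fail to be allowable, I would insert temporary finger moves wherever necessary to restore allowability; these are isotopies, so by Lemma \ref{isotopy} they do not change $\s$. It therefore suffices to compare the grading systems of two allowable diagrams that differ by one slide of a vanishing cycle $a$ over a vanishing cycle $b$ along a connecting arc $\beta$ whose interior lies in the closure of a single complementary disk $u_0$; when $b$ is consecutive to $a$ this is a handleslide, and a multislide is obtained by iterating such slides, so the reduction is complete once the single-slide case is handled.

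For a single slide, the new curve $a'=a\# b$ agrees with $a$ away from a neighborhood of $\beta\cup b$, runs parallel to $b$ on the $u_0$-side for the remainder of its length, and makes a small detour at each foot of $\beta$. The complementary regions of the vanishing cycles that change are: $u_0$, now reshaped by the two strands of $a'$ running along $\beta$; the thin \emph{corridor} between $b$ and the parallel copy of $a'$, which the crossings of $b$ with the other vanishing cycles cut into a cyclic chain of elementary pieces (bigons, triangles, and squares); and the two regions that meet $a$ at the feet of $\beta$. For a fixed labeling --- and, exactly as in Lemma \ref{isotopy}, relabeling only permutes the relevant local configurations among finitely many possibilities --- I would write Equation \ref{dimformula} for each new region. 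Each crossing of $a'$ with a third vanishing cycle $c$ on the corridor occurs in a pair straddling the old crossing $b\cap c$, and the grading equations of the elementary corridor pieces either force the two new chord variables of such a pair to equal the old chord variable of $b\cap c$ or express a new region's equation as a signed sum of old ones --- the same phenomena already recorded in the Reidemeister-II and Reidemeister-III analyses of Lemma \ref{isotopy} (e.g.\ $E_1=E_4+E_6$ and $E_9=E_{11}-E_t$). Using the additivity of the right-hand side of Equation \ref{dimformula}, it is enough to verify this on one elementary piece at a time rather than on the corridor as a whole. I expect the conclusion to be that the post-slide grading system is obtained from the pre-slide one by rearranging sums, substituting equations into one another, and appending equations whose variables appear nowhere else, so that the two systems have equal solution sets on the variables they share.

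It then remains to check that those shared variables contain canonical representatives of all the salient chords, with matching $\gr$-values. For a handleslide of consecutive $\gamma_i$ over $\gamma_{i+1}$, every $\gamma_j$ with $j\ne i$ is fixed, so for $j\notin\{i-1,i,i+1\}$ the salient crossing $\gamma_j\cap\gamma_{j+1}$ and its chord are literally unchanged; the salient crossings $\gamma_{i-1}\cap\gamma_i$ and $\gamma_i\cap\gamma_{i+1}$ are moved onto $a'$ but are canonically identified with the surviving crossings of $a'$ with $\gamma_{i-1}$ and with $\gamma_{i+1}$ once the corridor bigons between consecutive cycles have been cancelled, which is harmless because, as in the Reidemeister-II$^{-1}$ discussion in Lemma \ref{isotopy}, a chain of crossings between two consecutive cycles is forced by the grading equations to have equal $\gr$. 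The substitutions of the previous paragraph carry the defining relations of these salient chords across the slide, and a multislide follows by iteration.

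The main obstacle is the uniformity of the corridor computation: a single slide of $a$ over $b$ introduces a number of new crossings proportional to $\sum_{c\ne b}|b\cap c|$, and I must confirm, over all labelings and over all the ways a third cycle can enter and leave the corridor, that every resulting grading equation collapses to the asserted combination of old equations. The additivity of Equation \ref{dimformula} is what keeps this finite, reducing the entire verification to the same short list of local pictures that already appears in Figure \ref{isotfig}; organizing the corridor into those pieces cleanly, while keeping the labeling-dependent corner signs straight, is the part that will require the most care.
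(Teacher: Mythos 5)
Your first two paragraphs follow essentially the same route as the paper: reduce to a single slide, and use additivity of Equation \ref{dimformula} over the corridor (the paper's annulus in Figure \ref{slide}, collapsed there to a single rectangle with equation $E_6$) to show that the post-slide grading system is the old one plus substitutions and appended equations in fresh variables, so the solution set on surviving chords is unchanged. One small inaccuracy: a slide $a\mapsto a\# b$ produces \emph{one} parallel push-off of $b$, so each old crossing $b\cap c$ contributes a single new crossing $a'\cap c$ across the corridor, not a "pair straddling" it; this does not affect the structure of your argument.

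The genuine gap is in your last step, where you track the salient chords. You model a handleslide as "slide $\gamma_i$ over the consecutive $\gamma_{i+1}$" and a multislide as an iteration of such slides, but that is not what these moves are in \cite{W3}: there one fixes a pair $\gamma_1,\gamma_n$ (not consecutive in general) and applies a diffeomorphism of $\Sigma$ to the entire nonstationary block $\{\gamma_2,\ldots,\gamma_{n-1}\}$, realized as a prescribed sequence of slides of nonstationary cycles. Consequently your verification that the salient crossings survive only covers a configuration that does not occur. The paper's argument here is different and simpler: because the whole nonstationary block is moved by a single ambient diffeomorphism, every salient crossing between two nonstationary cycles is carried along unchanged, and only the two boundary crossings $x_1$ and $x_{n-1}$ (between a stationary and a nonstationary cycle) require a separate check, which is read off from Figure 8b of \cite{W3}. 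Your bigon-cancellation argument (that a chain of crossings between consecutive cycles is forced by the grading equations to have equal $\gr$) is not needed for these moves --- the paper reserves it for the shift move in Lemma \ref{shift}, where new crossings between consecutive cycles genuinely do appear. To repair your write-up, replace the final paragraph with the observation about the nonstationary block and the two boundary crossings.
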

\begin{proof}This follows from two facts. First, for any chord $x$, the set of values taken by $\gr(x)$ in the collection of solutions to the grading system is unchanged by any slide. Second, the handleslide and multislide moves can be performed in such a way that the collection of salient crossings is left unchanged.

\begin{figure}\centering
	\labellist
	\small\hair 2pt
	\pinlabel $\bullet\ p_1$ at 25 62
	\pinlabel $\bullet\ p_2$ at 70 62
	\pinlabel $\bullet\ p_3$ at 270 62
	\pinlabel $\bullet\ p_4$ at 320 78
	\pinlabel $\bullet\ p_5$ at 320 46
	\pinlabel $\bullet\ p_6$ at 375 95
	\pinlabel $\bullet\ p_7$ at 375 28
	\pinlabel {{\scalebox{2.5}{$\rightarrow$}}} at 205 62
	\endlabellist
	\includegraphics{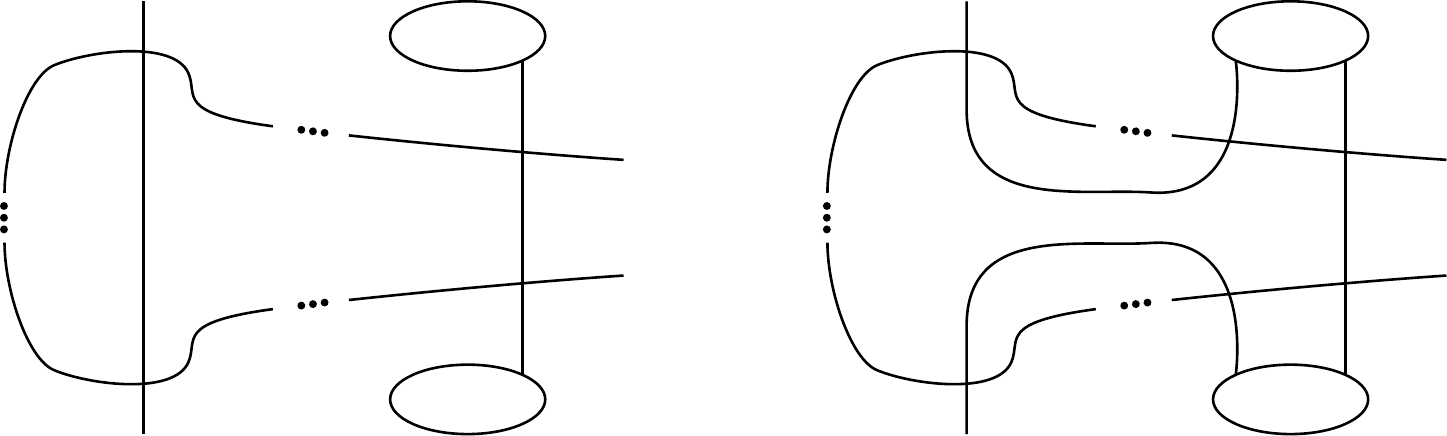}
	\caption{A slide. The vertical arc on the left is part of a vanishing cycle that slides over the vertical vanishing cycle on the right. The dotted parts of the segments may contain crossings. The two ovals in each figure are the ends of an annulus in $\Sigma$.}
	\label{slide}
\end{figure}As in the proof of Lemma \ref{isotopy}, $E_i$ is understood to be the grading equation for the disk in the complement of the vanishing cycles containing $p_i$, for some fixed labeling. To prove the first fact, in Figure \ref{slide} there are two arcs crossing the vertical vanishing cycle on the right. These could come from an arbitrary finger move, or, since this is coming from a crown diagram, they are the crossings coming from the vanishing cycles which are listed in $\Gamma$ before and after that vanishing cycle. By additivity of the grading equations, one may as well assume the regions containing $p_1,\ldots,p_7$ are as shown, possibly with corners hidden in the dotted parts of the segments, and no other vanishing cycles passing though their interiors. With this understood, the regions containing $p_6$ and $p_7$ can be interpreted as two pieces of a single rectangle whose grading equation will be called $E_6$. Then $E_1=E_3+E_6$, $E_2=E_4+E_5-E_6$, and the grading equations for all other regions before the slide are either unchanged or can be obtained by adding the grading equation for an appropriate rectangle in the annulus containing $p_6$ and $p_7$. Since the inverse of any slide is another slide up to isotopy, the first fact is proved.

For the second fact, \cite{W3} presents the sequence of slides for both multislide and handleslide as follows. Given a suitable pair of vanishing cycles for the move, choose a labeling so that they are $\gamma_1$ and $\gamma_n$, and such that the set of vanishing cycles affected by the move (that is, the nonstationary set) is $\{\gamma_2,\ldots,\gamma_{n-1}\}$. Then the move is achieved by applying a diffeomorphism of $\Sigma$ to the nonstationary set whose effect is a sequence of slides applied to the nonstationary set. This fact alone is enough to show that all of the salient crossings of the crown diagram except possibly $x_1$ and $x_{n-1}$ are preserved under handleslide and multislide. As for $x_1$ and $x_{n-1}$, the effect of the move near these crossings is depicted in Figure 8b of that paper, where the short vertical arc, contained in $\gamma_2$ or in $\gamma_{n-1}$, crosses the lower circle ($\gamma_1$ or $\gamma_n$ respectively) at the salient crossing $x_1$ or $x_{n-1}$, and that crossing is unchanged by the move.
\end{proof}
\begin{lemma}\label{shift}If two crown diagrams $(\Sigma_i,\Gamma_i)_{i=1,2}$ of $M$ are related by a shift move, then $\s(\Sigma_1,\Gamma_1)=\s(\Sigma_2,\Gamma_2)$.\end{lemma}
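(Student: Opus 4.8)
The plan is to reduce the shift move to the slide moves already handled in Lemma \ref{slides}, exactly as was done for handleslide and multislide, by invoking the decomposition of shift into a sequence of slides from \cite[Theorem 4.3]{W3}. First I would recall the precise form of a shift move: it rotates the cyclic position of a single vanishing cycle past a block of consecutive vanishing cycles, and by \cite[Theorem 4.3]{W3} it is realized as a specified sequence of slides applied to a nonstationary set of vanishing cycles. Since the first fact established in the proof of Lemma \ref{slides} — that the set of values taken by $\gr(x)$ over all solutions of the grading system is unchanged by any single slide — applies verbatim to each slide in this sequence, the salient sequence $\s$ is preserved provided we can show the collection of salient chords is carried to the collection of salient chords.

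The core of the argument is therefore the bookkeeping of salient crossings under a shift. I would choose a labeling adapted to the shift so that the moving vanishing cycle and the block it passes are placed in standard cyclic position, with the nonstationary set equal to some $\{\gamma_2,\ldots,\gamma_{n-1}\}$ as in Lemma \ref{slides}. The shift changes the cyclic order of $\Gamma$, so unlike handleslide and multislide it does \emph{not} fix $\Gamma$ pointwise as a cyclically ordered set; consequently the indexing of the salient chords $x_i$ (which are attached to the pairs $\gamma_i,\gamma_{i+1}$ of \emph{consecutive} cycles) is reindexed by the shift. The key observation is that this reindexing is just the cyclic relabeling induced by the shift on $\Z/k\Z$: every consecutive pair of the new diagram either is a consecutive pair of the old diagram (with its unique crossing untouched by the slides, which occur away from it) or is a pair $(\gamma_j,\gamma_{n})$ respectively $(\gamma_1,\gamma_j)$ appearing at the moment the moving cycle is inserted into its new slot, whose unique crossing is the one depicted as unchanged in Figure 8b of \cite{W3}. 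I would then check, region by region as in the proof of Lemma \ref{isotopy} and Lemma \ref{slides}, that along the sequence of slides no salient crossing is destroyed or created: each intermediate diagram may temporarily fail allowability, but exactly as in the Reidemeister-II$^{-1}$ discussion, the grading equations of the pair of bigons that arise force the three affected $\gr$ variables to agree, so the value of $\gr$ on the surviving salient chord is unambiguous.

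Finally I would assemble these pieces: by the first fact of Lemma \ref{slides} the solution set of the grading system, restricted to the value taken on any fixed chord, is invariant under each slide; by the bookkeeping above the salient chords of $(\Sigma_2,\Gamma_2)$ are, up to the cyclic relabeling that the shift induces on $\Z/k\Z$, the same chords as the salient chords of $(\Sigma_1,\Gamma_1)$; and since a salient sequence is by definition an \emph{unordered-up-to-cyclic-rotation} record of the $\gr$ values on $x_1,\ldots,x_k$ — being extracted from the cyclically ordered diagram with no distinguished starting point — this relabeling does not change the element of $\s$. Hence $\s(\Sigma_1,\Gamma_1)=\s(\Sigma_2,\Gamma_2)$.

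The step I expect to be the main obstacle is the second one: verifying that the particular sequence of slides prescribed for a shift in \cite[Theorem 4.3]{W3} really does leave every salient crossing intact, including the two boundary cases analogous to $x_1$ and $x_{n-1}$ in Lemma \ref{slides}. Unlike handleslide and multislide, the shift genuinely permutes the cyclic order, so one must be careful that the crossing data used to define the signs of corners — and hence the grading equations themselves — is tracked consistently through the relabeling at the instant the moving cycle changes slots; getting the Figure 8b local picture to match up with the global reindexing is where the real care is needed.
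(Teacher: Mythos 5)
Your proposal is correct and follows essentially the same route as the paper: reduce the shift to a sequence of slides, apply the first fact from Lemma \ref{slides} to each slide, and handle the salient-crossing bookkeeping via the bigon argument from the Reidemeister-II$^{-1}$ discussion in Lemma \ref{isotopy}. One caution: your appeal to the Figure 8b picture to argue that the salient crossings are literally preserved is exactly the part of the Lemma \ref{slides} argument that the paper says does \emph{not} apply to the shift move --- the slides realizing a shift can introduce new crossings between consecutive vanishing cycles, so that after restoring allowability the unique remaining crossing $x'$ may differ from the original salient crossing $x$. Your fallback is the right fix and is the paper's actual argument: cancel $x$ last, so that $x$ and $x'$ are corners of the final pair of bigons, whose grading equations force $\gr(x)=\gr(x')$, and hence the salient sequence is unchanged even though the salient crossing itself is replaced.
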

\begin{proof}
According to \cite[Theorem 3.1]{W3}, a shift move can be realized by a sequence of slides. As in the proof of Lemma \ref{slides}, any slide which does not change the collection of salient crossings will preserve $\s$; however, the argument in the proof of Lemma \ref{slides} that the collection of salient crossings is unchanged does not apply to the shift move, so there is one further argument to make. 

Applying the sequence of slides that produces a shift move can introduce new crossings between a consecutive pair of vanishing cycles $\gamma_i$ and $\gamma_{i+1}$. One of those new crossings $x'$ may be the only one left after canceling all the other crossings between $\gamma_i$ and $\gamma_{i+1}$ by Reidemeister-II$^{-1}$ moves in the course of an isotopy to obtain an allowable crown diagram. In this situation the original salient crossing $x$ can be cancelled last, and since $x'$ was not cancelled, it was one of the three corners in the last remaining pair of bigons, and 
in that situation it has already been shown that $\gr(x)=\gr(x')$. In this way, changing the salient crossing from $x$ to $x'$ does not change $\s$.\end{proof}
\begin{proof}[Proof of Theorem \ref{invthm}]According to \cite[Theorem 1.1]{W2}, any pair of crown diagrams in the same homotopy class become diffeomorphic after applying an isotopy and a sequence of moves chosen from a list of four moves called stabilization, handleslide, multislide, and shift. According to \cite[Theorem 4.9]{W3}, the stabilization move can be omitted from this list because the diagrams have the same genus. According to Lemmas \ref{isotopy}, \ref{slides}, and \ref{shift}, $\s$ is unchanged by isotopy and the remaining three moves.\end{proof}
\section{Examples}\label{examples}
\subsection{Relating crown diagrams of knot surgery manifolds}\ 
According to \cite[Theorem 1.9]{FS}, if $K$ is a fibered knot, then $E(1)$ is homeomorphic to the knot surgery manifold $E(1)_K$ and the Seiberg-Witten invariant of $E(1)_K$ is the Alexander polynomial $\Delta_K(t)$. In the same paper, Fintushel and Stern described a Lefschetz fibration structure on $E(1)_K$. The current paper uses a monodromy factorization of this Lefschetz fibration due to Yun \cite{Y}, and it refers to \href{https://knotinfo.math.indiana.edu/}{KnotInfo} for the Alexander polynomials and monodromy for the knots $7_6$ and $10_{133}$. These are genus-2 fibered knots with the same Alexander polynomial. Genus-1 knots would have simpler diagrams, but they are not used because the two fibered genus-1 knots are the trefoil $3_1$ and the figure-8 knot $4_1$, which are distinguished by their Alexander polynomials. 

It has been known since at least the publication of \cite{L} that one may convert a Lefschetz fibration over $S^2$ into a crown map by a homotopy of the fibration in which one converts the Lefschetz critical points into 3-cusped circles and performs cusp merges to unite the circles, resulting in a crown map whose crown diagram is called a \emph{coronation} of the Lefschetz fibration (see \cite[Figure 7]{W4} for an example of how this looks in a crown diagram). Using the notion of \emph{surgered diagrams} from \cite{W2} to deduce properties of the resulting vanishing cycles, Behrens and Hayano gave a description of the full collection of coronations of a Lefschetz fibration in terms of mapping class groups of surfaces \cite{BH}. In \cite{W4} there is a fully diagrammatic interpretation of the same homotopy, yielding a single, much more explicit, member of this collection. Recording the vanishing cycles using the notation of train tracks, the weights of such a diagram reach the tens of thousands for a diagram with 22 vanishing cycles, such as the ones for knot surgery on $E(1)$. Instead, this paper will consider a simpler kind of diagram which is slide-equivalent to any coronation of Yun's Lefschetz fibration. It comes with a collection of crossings that end up being the salient crossings of a coronation. Before discussing this, though, recall that the invariance of $\s$ applies to crown diagrams with a common homotopy class. The manifolds $\7$ and $\1$ are nominally different manifolds, so it is necessary to explain how their coronations would have a common homotopy class if their corresponding smoothings were isotopic.

\begin{lemma}\label{homotopylemma}Suppose $K$ and $K'$ are fibered knots in $S^3$, and let $f\co\x\to S^2$, $f'\co\y\to S^2$ be the Lefschetz fibrations studied in \cite{Y}.
\begin{enumerate}
\item There is a topological manifold $M$ and homeomorphisms $h\co M\to \x$, $h'\co M\to\y$ such that $f\circ h$ and $f'\circ h'$ are homotopic maps $M\to S^2$.
\item If the two smoothings coming from these Lefschetz fibrations are isotopic, then the salient sets of their coronations are equal.
\end{enumerate}\end{lemma}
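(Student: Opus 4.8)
The plan is to prove the two parts in turn: part (1) is a statement purely about homotopy classes of maps to $S^2$ that arranges the hypotheses of Theorem~\ref{invthm}, and part (2) is then a formal consequence of part (1) and Theorem~\ref{invthm}.

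\emph{Part (1).} By \cite[Theorem 1.9]{FS} both $\x$ and $\y$ are homeomorphic to $E(1)$, a closed simply connected oriented $4$-manifold; take $M$ to be the underlying topological manifold of $\x$, let $h=\id$, and let $h'\co\x\to\y$ be an arbitrary homeomorphism, so that the task is to choose $h'$ with $f\simeq f'\circ h'$. Replacing $h'$ by $h'\circ\psi$ for $\psi\in\mathrm{Homeo}(\x)$ replaces $f'\circ h'$ by $(f'\circ h')\circ\psi$, so it is enough to show $f$ and $f'\circ h'$ lie in one orbit of the $\mathrm{Homeo}(\x)$-action on $[\x,S^2]$. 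Using the Postnikov tower of $S^2$ (with $P_2=K(\Z,2)$, first $k$-invariant the cup square, and higher stages built from $K(\Z,3)$ and $K(\Z/2,4)$) together with $H^1(\x;\Z)=H^3(\x;\Z)=0$, a homotopy class of maps $\x\to S^2$ is determined by the pullback $c\in H^2(\x;\Z)$ of the generator, which must satisfy $c\smile c=0$, together with a secondary invariant in $H^4(\x;\Z/2)\cong\Z/2$, and every such pair occurs. Now $\mathrm{Homeo}(\x)$ acts on $c$ through the full isometry group of the intersection form (every isometry of the form of a closed simply connected topological $4$-manifold is realized by a self-homeomorphism, by Freedman) and trivially on the $\Z/2$-factor. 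Hence $f$ and $f'\circ h'$ become conjugate once their $\Z/2$-invariants agree and their fiber classes lie in a common isometry orbit. The latter holds because both fiber classes are primitive (the fibrations admit sections) of square zero and of the same type, a point one extracts from Yun's description of these fibrations for fibered knots of a fixed genus, and because the isometry group of the odd indefinite unimodular form of $E(1)$ is transitive on primitive vectors of a given square and type. The former holds because the $\Z/2$-invariant is computable from Yun's monodromy factorization and is unchanged when the data of $K$ is exchanged for that of $K'$; indeed it is a function only of discrete invariants (the fiber genus, the parity of the number of vanishing cycles, and the mod-$2$ homology class of their union) shared by the two fibrations since $K$ and $K'$ are fibered of the same genus. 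This gives part (1).

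\emph{Part (2).} Suppose the two smoothings are isotopic, and fix a smooth $5$-manifold $Y$, a homeomorphism from $\x\times[0,1]$ to $Y$ whose projection to $[0,1]$ is regular, and diffeomorphisms of the two ends of $Y$ to $\x$ and $\y$. Since the projection is regular, $Y$ is a trivial smooth fiber bundle over $[0,1]$ by Ehresmann's theorem, i.e.\ a smooth product cobordism; trivializing it and composing with the end diffeomorphisms gives a diffeomorphism $\Theta\co\x\to\y$, and since the given topological product structure on $Y$ differs from the smooth one only by a slide in the contractible direction $[0,1]$, one gets $\Theta\simeq h'$. With part (1) this yields $f'\circ\Theta\simeq f'\circ h'\simeq f$ as maps $\x\to S^2$. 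Now take the explicit coronations of \cite{W4} of $f$ and $f'$; because $f$ and $f'$ have equal fiber genus (fixed by the genus of the knot) and hence equal numbers of critical points (forced by $e(\x)=e(\y)$), these coronations are crown diagrams of one and the same genus, and each lies in the homotopy class of its Lefschetz fibration. Pulling the coronation of $f'$ back by the diffeomorphism $\Theta$ produces a crown diagram of $\x$ of that genus in the homotopy class of $f'\circ\Theta\simeq f$, hence with the same genus and homotopy class as the coronation of $f$. By Theorem~\ref{invthm} their salient sets agree, and since pulling a crown diagram back by a diffeomorphism does not change $\s$, the salient sets of the two coronations are equal.

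\emph{The main obstacle} is the $\Z/2$ secondary invariant in part (1): it is invisible to the intersection form, so its equality for $f$ and $f'$ must be read off the explicit monodromy factorizations of \cite{Y}, i.e.\ one must verify that the different monodromies of $7_6$ and $10_{133}$ nonetheless produce homotopic maps to $S^2$. Extracting $\Theta$ from the regular projection and matching the coronation genera in part (2) are by comparison routine.
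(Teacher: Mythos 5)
Your overall architecture matches the paper's: for part (1) you reduce to (a) putting the two fiber classes into one orbit of the isometry group of the intersection form and realizing that isometry by a self-homeomorphism via Freedman, and (b) controlling the residual homotopy ambiguity for maps to $S^2$ with a fixed class in $H^2$; part (2) is then Theorem \ref{invthm} applied to coronations sharing a genus and homotopy class. Steps (a) and part (2) are essentially the paper's argument (the paper uses Wall's theorem in the form of \cite[Proposition 1.2.18]{GS} plus the observation that the fiber classes are primitive and non-characteristic because of the square $-2$ section).

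The genuine gap is exactly the point you flag as ``the main obstacle,'' namely step (b). Having matched the classes $c$, you posit a secondary $\Z/2$-valued invariant and assert both that $\mathrm{Homeo}(\x)$ acts trivially on it and that it ``is a function only of discrete invariants (the fiber genus, the parity of the number of vanishing cycles, and the mod-2 homology class of their union) shared by the two fibrations.'' Neither claim is proved, no procedure is given for extracting this invariant from a Lefschetz fibration, and as written part (1) is therefore not established. The paper closes this hole by showing the ambiguity is vacuous rather than by matching it: under Pontrjagin--Thom the fiber of $[M,S^2]\to H^2(M)$ over $c$ is the set of framings of a surface dual to $c$ up to framed cobordism, and by \cite[Theorem 2 and Example 1.3]{KMT} this set is a single point whenever $M$ is simply connected with \emph{odd} intersection form --- which holds for $M\cong\CP\#9\CPb$. (Concretely, the potential $\eta^2$-ambiguity in $H^4(M;\pi_4(S^2))$ is killed by the odd self-intersections; compare $[\CP,S^2]=\{\ast\}$.) You should replace your matching paragraph with this citation or an equivalent obstruction-theoretic computation showing the indeterminacy subgroup is all of $H^4(M;\pi_4(S^2))$. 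A smaller soft spot in your part (2): the assertion $\Theta\simeq h'$ does not follow from what you have written, since $h'$ was manufactured by Freedman realization and has no a priori relation to the diffeomorphism extracted from the isotopy of smoothings; the paper instead works directly with the homotopy of maps on $\x\times[0,1]$ supplied by the isotopy and never needs to compare the two homeomorphisms.
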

\begin{proof}Let $M$ be a topological 4-manifold homeomorphic to $\CP\#9\CPb$ (and thus homeomorphic to the total spaces of $\x$ and $\y$). From the Pontrjagin-Thom construction, it is known that the collection of homotopy classes of maps $M\to S^2$ correspond to framed cobordism classes of framed surfaces in $M$, where the framed surface is obtained as the preimage of a neighborhood of a point in $S^2$. The goal is to find homeomorphisms $h,h'$ from $M$ to $\x$ and to $\y$ sending a cobordism class of surfaces in $M$ to the cobordism class of the fiber in $\x$ and in $\y$, and then to deal with the framing separately. Let $h\co M\to\x$ and $\phi\co M\to\y$ be homeomorphisms, and let $c\in H^2(\x)$ and $c'\in H_2(\y)$ be the fiber classes of $f$ and $f'$, respectively. Then neither $(\phi\circ h^{-1})_*(c)$ nor $c'$ are characteristic homology classes in $\y$, and both are primitive, because $f$ and $f'$ each have a section whose self-intersection number is $-2$ \cite[Section 6]{FS}, while its intersection with the fiber class is 1. Thus, by \cite[Proposition 1.2.18]{GS}, there is an automorphism  $\alpha$ of the intersection pairing of $\y$ sending $(\phi\circ h^{-1})_*(c)$ to $c'$. According to the topological $h$-cobordism theorem, $\alpha$ is realized as the induced map of a self-homeomorphism $\psi$ of $\y$ (see for example \cite[Chapter 9]{GS}). Thus, $h'=\psi\circ\phi$ is a homeomorphism such that $h^{-1}_*(c)=(h')^{-1}_*(c')$, which yields the required correspondence of cobordism classes.\[\begin{tikzcd}[column sep=tiny]
&\ar[dl, "h"']M \ar[dr, "h'=\psi\circ\phi"] &\\
\x\ar[dr, "f"]&&\ar[dl, "f'"']\y\\
&S^2&
\end{tikzcd}\]

Having found $h,h'$ such that $(f\circ h)^{-1}(pt)$ is cobordant in $M$ to $(f'\circ h')^{-1}(pt)$, let $\Sigma\subset M$ be a surface in that cobordism class. According to Theorem 2 and Example 1.3 of \cite{KMT}, there is a unique framing of $\Sigma$ up to framed cobordism because $M$ is simply connected and its intersection form is odd. For this reason, $h$ and $h'$ send the unique framed cobordism class of $\Sigma$ to that of the fiber of $f$ and $f'$, respectively. For this reason, $f_0=f\circ h$ and $f_1=f'\circ h'$ are continuously homotopic maps $M\to S^2$. This completes the proof of Item (1).

For Item (2), since the smoothings coming from $f_0$ and $f_1$ are isotopic, there is an isotopy of $f_1$ converting it to be a smooth map on the total space of $f_0$. Thus, $f_0$ and $f_1$ can be taken as smooth maps on the ends of $\x\times[0,1]$ connected by a continuous homotopy. That is, they share a homotopy class.

To the ends of this homotopy, append the further homotopies which convert the Lefschetz fibrations $f_0$ and $f_1$ into crown maps $c_0,c_1$ using \cite[Section 4]{W1}. Then, according to \cite[Section 4]{W3}, these can be connected by a deformation corresponding to a sequence of genus-preserving moves on crown maps, which have equal salient sets by Theorem \ref{invthm}.\end{proof}
\begin{q}It seems natural to ask whether two diffeomorphic smoothings realized as the total spaces of homotopic fibration maps is enough to imply that the two smoothings are isotopic. In that case, the main result of the paper could be immediately strengthened to state that the manifolds are not diffeomorphic.\end{q}
\subsection{Diagrams for two knot surgery manifolds}
Figure \ref{yunfig} depicts the relevant information to obtain Yun's monodromy factorizations for $\7$ and $\1$. Starting with the five circles in Figure \ref{yunfig1}, cut the surface at the four circles shown in Figure \ref{yunfig2}. Let $t_x$ denote the right Dehn twist about the circle $x$. Then, according to KnotInfo, and using the circles shown in Figure \ref{yunfig4}, the monodromy of $7_6$ can be written as the composition of Dehn twists \[\phi_{7_6}=t_a\cdot t_b\cdot t_c^{-1}\cdot t_d,\] applied from left to right, while the monodromy of $10_{133}$ can be written \[\phi_{10_{133}}=t_a\cdot t_b\cdot t_c\cdot t_d^{-1}\cdot t_d^{-1}\cdot t_f.\] According to \cite[Theorem 2.11]{Y}, the Lefschetz monodromy factorization for $E(1)_K$ is \[\left(\prod\limits_{i=0}^4t_{\phi_K(B_i)}\right)^2\cdot\left(\prod\limits_{i=0}^4t_{B_i}\right)^2.\]
The collection of circles that results for $7_6$ is a messy picture, which can be simplified considerably by applying one Dehn twist and pushing the feet of two of the handles around, switching their places in a clockwise rotation as in Figure \ref{diffeo}. The resulting collection of Lefschetz vanishing cycles can be seen in Figures \ref{pc-7-6-1-6}--\ref{pc-7-6-15-21} by deleting the pair of long rectangles and connecting the resulting ends with vertical line segments. The simplifying diffeomorphism is not used for $\1$, and the resulting collection of circles appears analogously in Figure \ref{pc-10-133}. Note that in those figures, the reversal of ordering of the circles comes from the fact that the circles in the above monodromy factorization are vanishing cycles of a Lefschetz fibration, while the circles in Figures \ref{pc-7-6} and \ref{pc-10-133} get their ordering from the merging algorithm depicted in \cite[Figure 6]{W4}. In short, the so-called \emph{pseudocoronation} shown in Figure \ref{pc-7-6} or Figure \ref{pc-10-133} comes from choosing an arc in the fiber of the Lefschetz fibration that crosses each of the fibration's Lefschetz vanishing cycles exactly once, replacing that arc with a tube over which all of those vanishing cycles run, adding two more vanishing cycles in a standard way as in Figures \ref{pc-7-6-20-22} and \ref{pc-10-133-20-22}, then ordering the vanishing cycles as required by \cite[Figure 6]{W4}: In that figure, $r_i$ or $r_i^s$ corresponds to the $i^{th}$ Lefschetz vanishing cycle. Circle 20 in 
Figures in Figures \ref{pc-7-6-20-22} and \ref{pc-10-133-20-22} corresponds to $g_1$ in \cite[Figure 6]{W4}, and circle 22 in Figures \ref{pc-7-6-20-22} and \ref{pc-10-133-20-22} corresponds to $b_1$ in \cite[Figure 6]{W4}.

\begin{figure}
	\centering
	\begin{subfigure}{0.4\textwidth}
		\labellist
		\small\hair 2pt
		\pinlabel $B_0$ at 100 250
		\pinlabel $B_1$ at 135 230
		\pinlabel $B_2$ at 135 204
		\pinlabel $B_3$ at 135 184
		\pinlabel $B_4$ at 135 160
		\endlabellist
	\includegraphics[width=\textwidth]{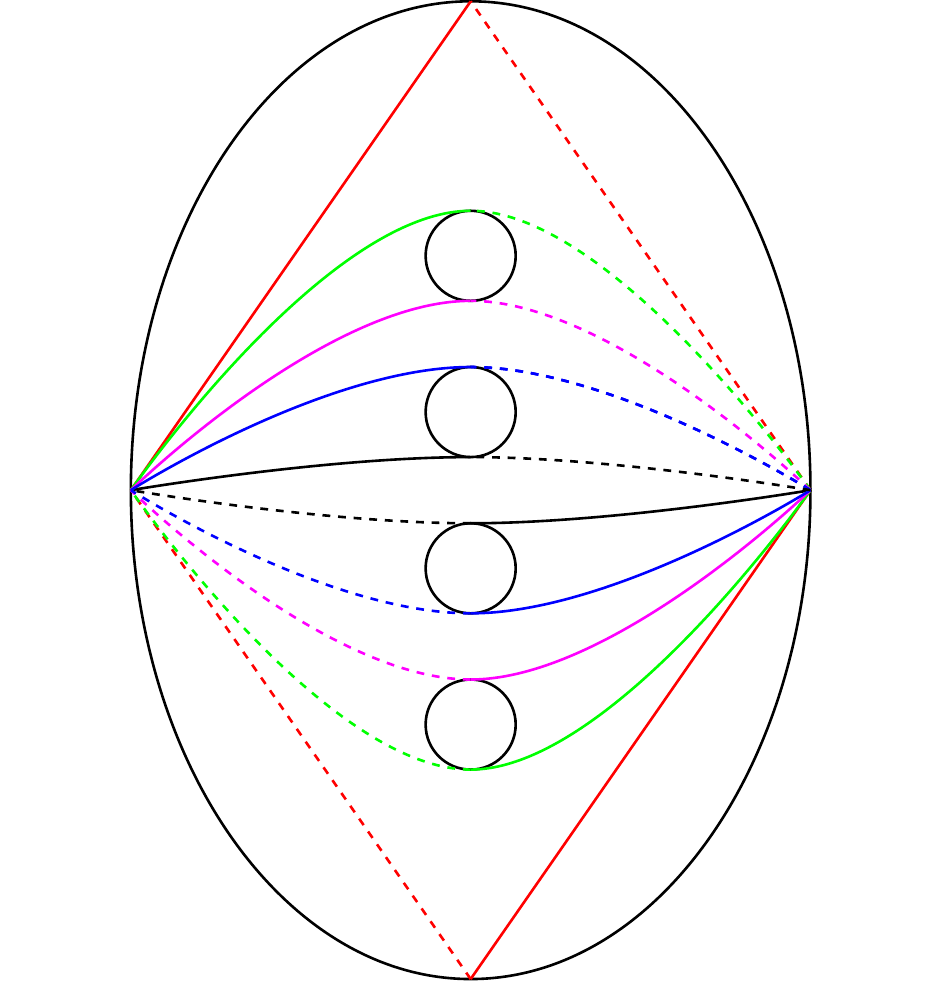}
	\caption{\cite[Figure 3]{Y} in the case where $g=2$ and $n=1$.}
	\label{yunfig1}
	\end{subfigure}\hspace{.05\textwidth}
	\begin{subfigure}{0.4\textwidth}
		\labellist
		\small\hair 2pt
		\pinlabel 1 at 135 250
		\pinlabel 2 at 135 186
		\pinlabel 3 at 135 97
		\pinlabel 4 at 135 30
		\endlabellist
	\includegraphics[width=\textwidth]{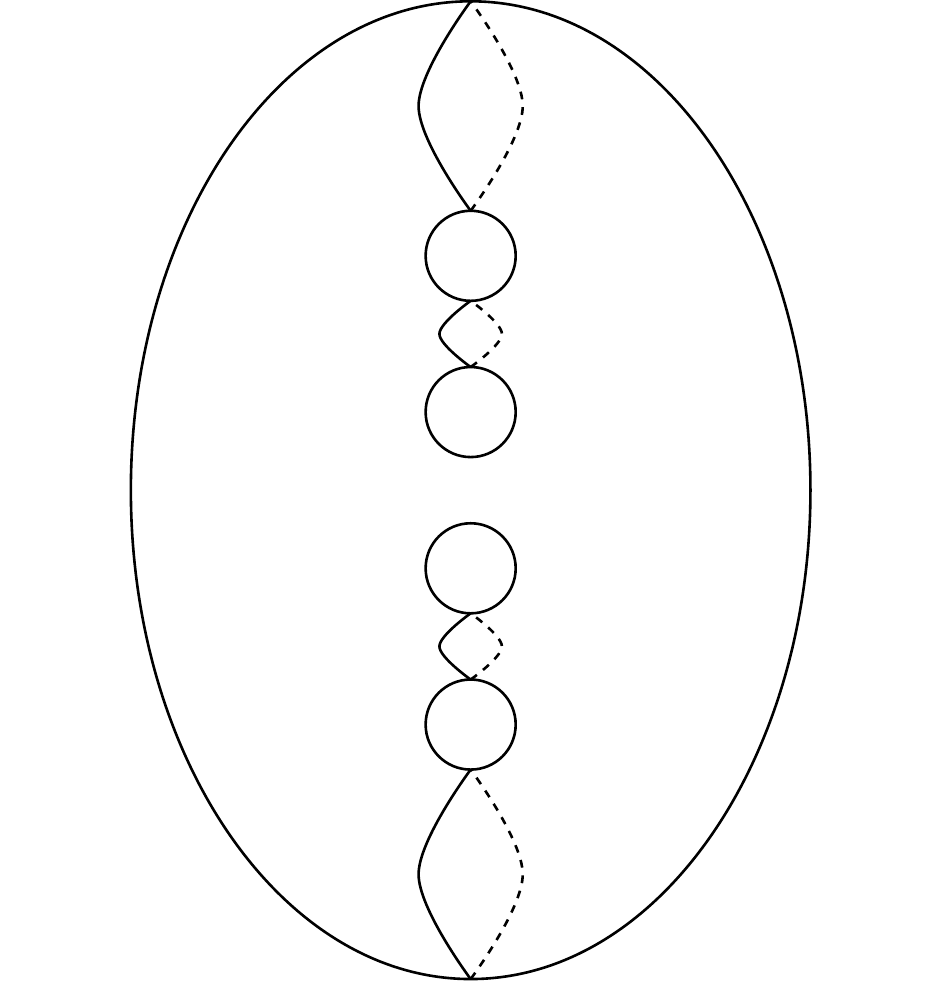}
	\caption{Cuts used to produce the diagrams in Figures \ref{yunfig3} and after.}
	\label{yunfig2}
	\end{subfigure}\\
\begin{subfigure}{0.4\textwidth}
		\labellist
		\small\hair 2pt
		\pinlabel $B_0$ at 60 222
		\pinlabel $B_1$ at 159 200
		\pinlabel $B_2$ at 33 192
		\pinlabel $B_3$ at 120 108
		\pinlabel $B_4$ at 152 24
		\pinlabel 1 at 95 215
		\pinlabel 1 at 175 215
		\pinlabel 2 at 95 165
		\pinlabel 2 at 175 165
		\pinlabel 3 at 95 115
		\pinlabel 3 at 175 115
		\pinlabel 4 at 95 65
		\pinlabel 4 at 175 65
		\endlabellist
	\includegraphics[width=\textwidth]{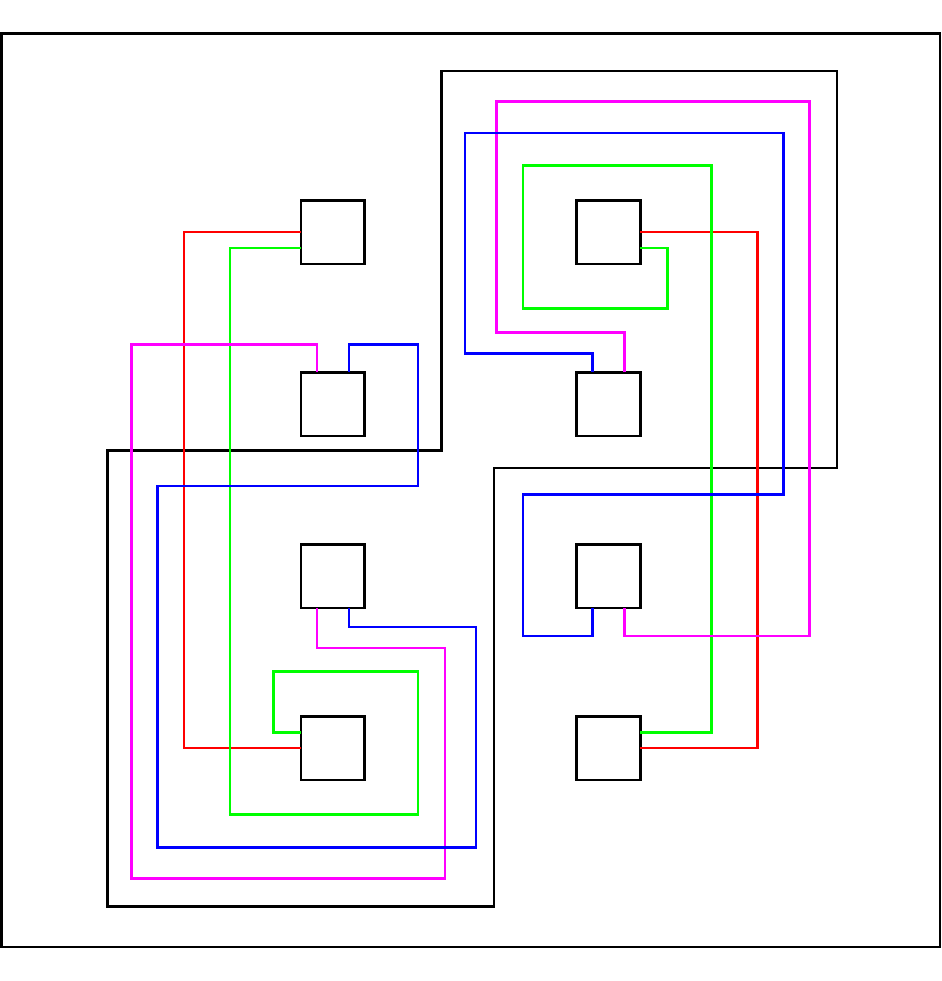}
	\caption{These are circles of Figure \ref{yunfig1} after cutting the surface.}
	\label{yunfig3}
	\end{subfigure}\hspace{.05\textwidth}
	\begin{subfigure}{0.4\textwidth}
		\labellist
		\small\hair 2pt
		\pinlabel $f$ at 70 230
		\pinlabel $a$ at 90 185
		\pinlabel $c$ at 120 172
		\pinlabel $d$ at 208 230
		\pinlabel $b$ at 191 191
		\endlabellist
	\includegraphics[width=\textwidth]{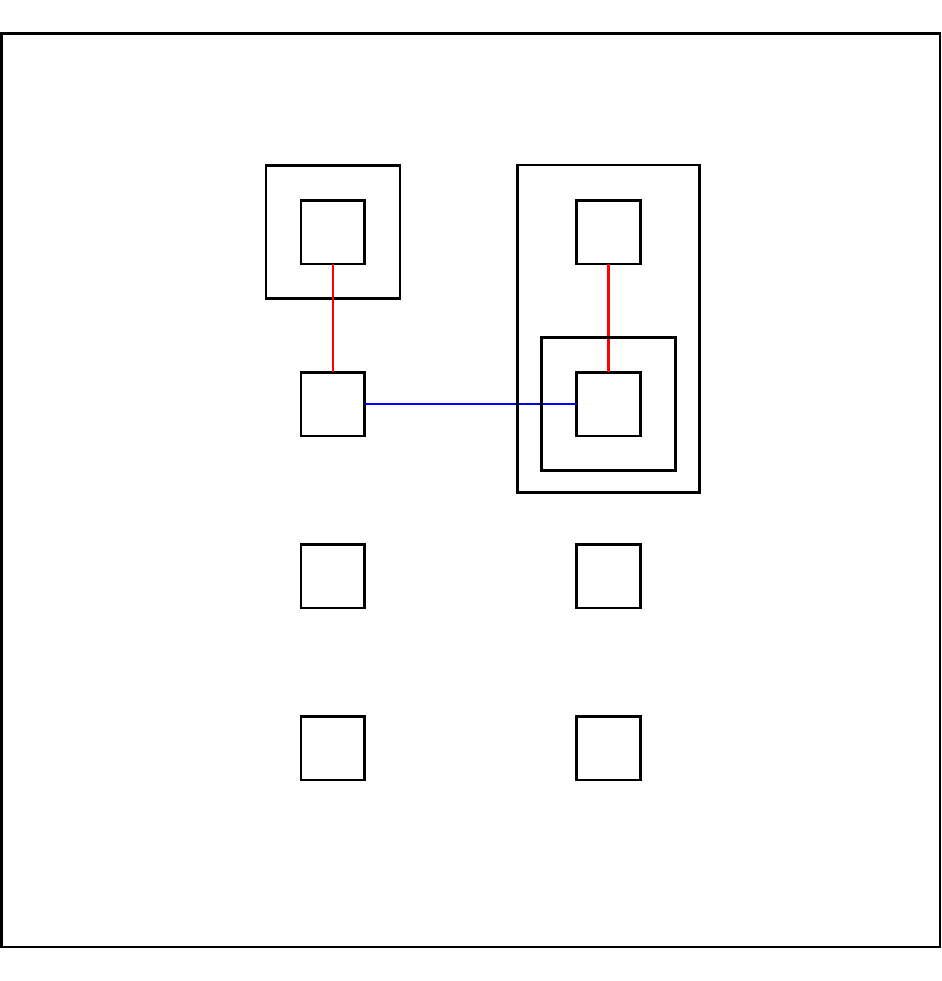}
	\caption{The circles $a$--$f$ in KnotInfo's factorization of the monodromy for $7_6$ and $10_{133}$.}
	\label{yunfig4}
	\end{subfigure}
	\caption{Figures for finding the Lefschetz vanishing cycles for $\7$ and $\1$.}\label{yunfig}
\end{figure}

According to the algorithm in \cite{W4} and the subsequent proofs that the algorithm is realizable as a sequence of slides between vanishing cycles in the pseudocoronation, a coronation of either of the Lefschetz fibrations of $\7$ or $\1$ can be obtained from applying slides among the vanishing cycles in Figures \ref{pc-7-6} and \ref{pc-10-133}, while keeping the staircase-shaped arrangement in Figures \ref{pc-7-6-all} and \ref{pc-10-133-all} unchanged. Since $\s$ is invariant under such slides, one could instead use the grading system as given by the depicted pseudocoronations, taking the crossings between consecutive vanishing cycles in the staircase arrangement as the salient crossings: The first step in the algorithm for finding the vanishing cycle $\gamma_i$ of the coronation is to repeatedly slide $\gamma_i$ over $\gamma_{22}$ along $\gamma_{i+1}$ to eliminate all crossings between $\gamma_i$ and $\gamma_{i+1}$ other than the salient crossing in the staircase.
 
\begin{figure}
	\centering
	\begin{subfigure}{0.3\textwidth}
		\labellist
		\small\hair 2pt
		\pinlabel 1 at 34 231
		\pinlabel 1 at 140 231
		\pinlabel 2 at 34 165
		\pinlabel 2 at 140 165
		\pinlabel 3 at 34 99
		\pinlabel 3 at 140 99
		\pinlabel 4 at 34 33
		\pinlabel 4 at 140 33
		\pinlabel 5 at 310 149
		\pinlabel 5 at 310 115
		\pinlabel $c$ at 180 200
		\pinlabel $\rightarrow$ at 430 132
		\endlabellist
		\includegraphics[width=\textwidth]{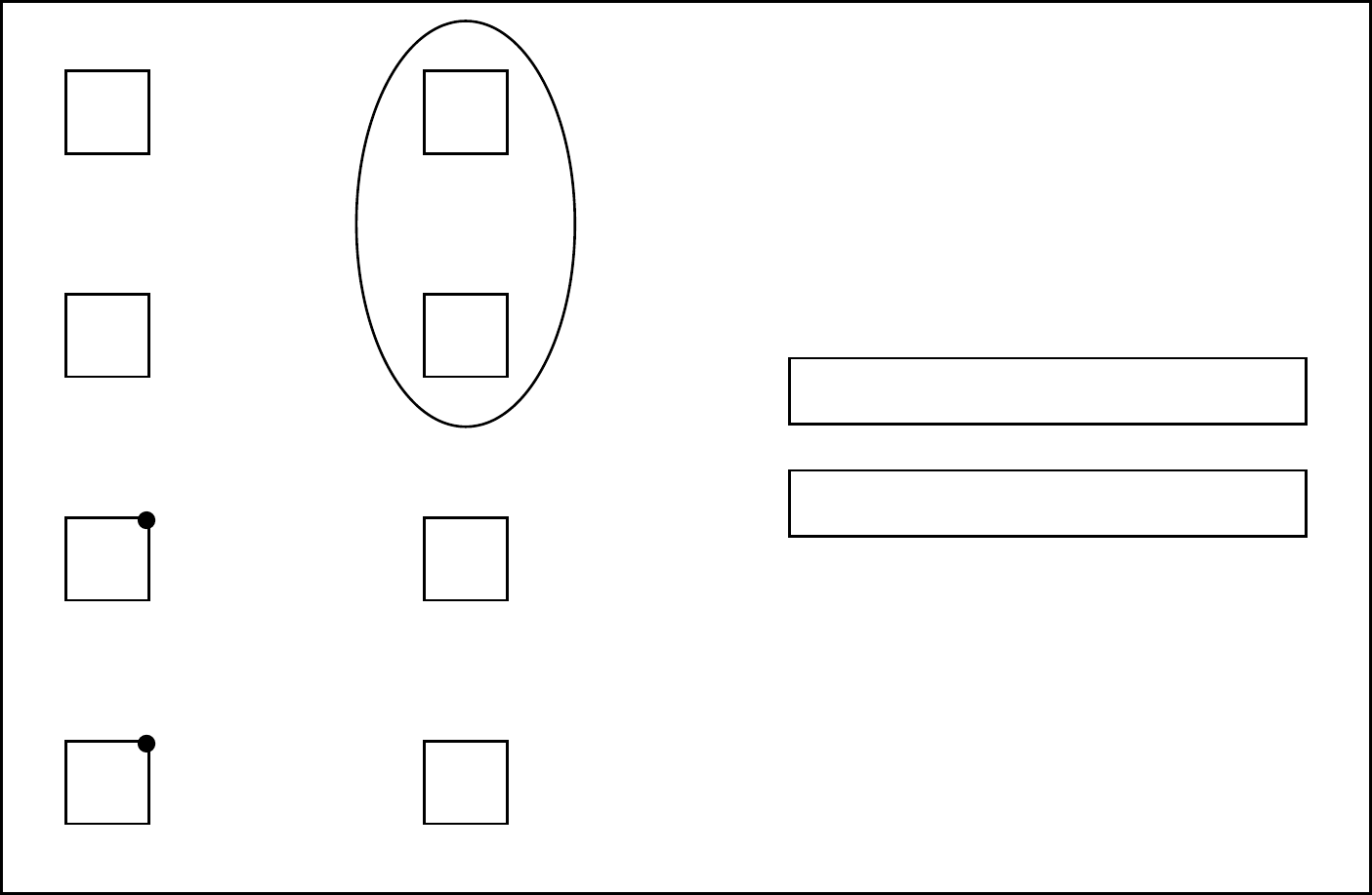}
		\caption{\ }
		\label{diffeo1}
	\end{subfigure}\hspace{.5cm}
	\begin{subfigure}{0.3\textwidth}
		\labellist
		\small\hair 2pt
		\pinlabel 1 at 34 231
		\pinlabel 1 at 140 231
		\pinlabel 2 at 34 165
		\pinlabel 2 at 140 165
		\pinlabel 3 at 75 84
		\pinlabel 3 at 140 99
		\pinlabel 4 at 34 33
		\pinlabel 4 at 140 33
		\pinlabel 5 at 310 149
		\pinlabel 5 at 310 115
		\pinlabel $\rightarrow$ at 430 132
\endlabellist
		\includegraphics[width=\textwidth]{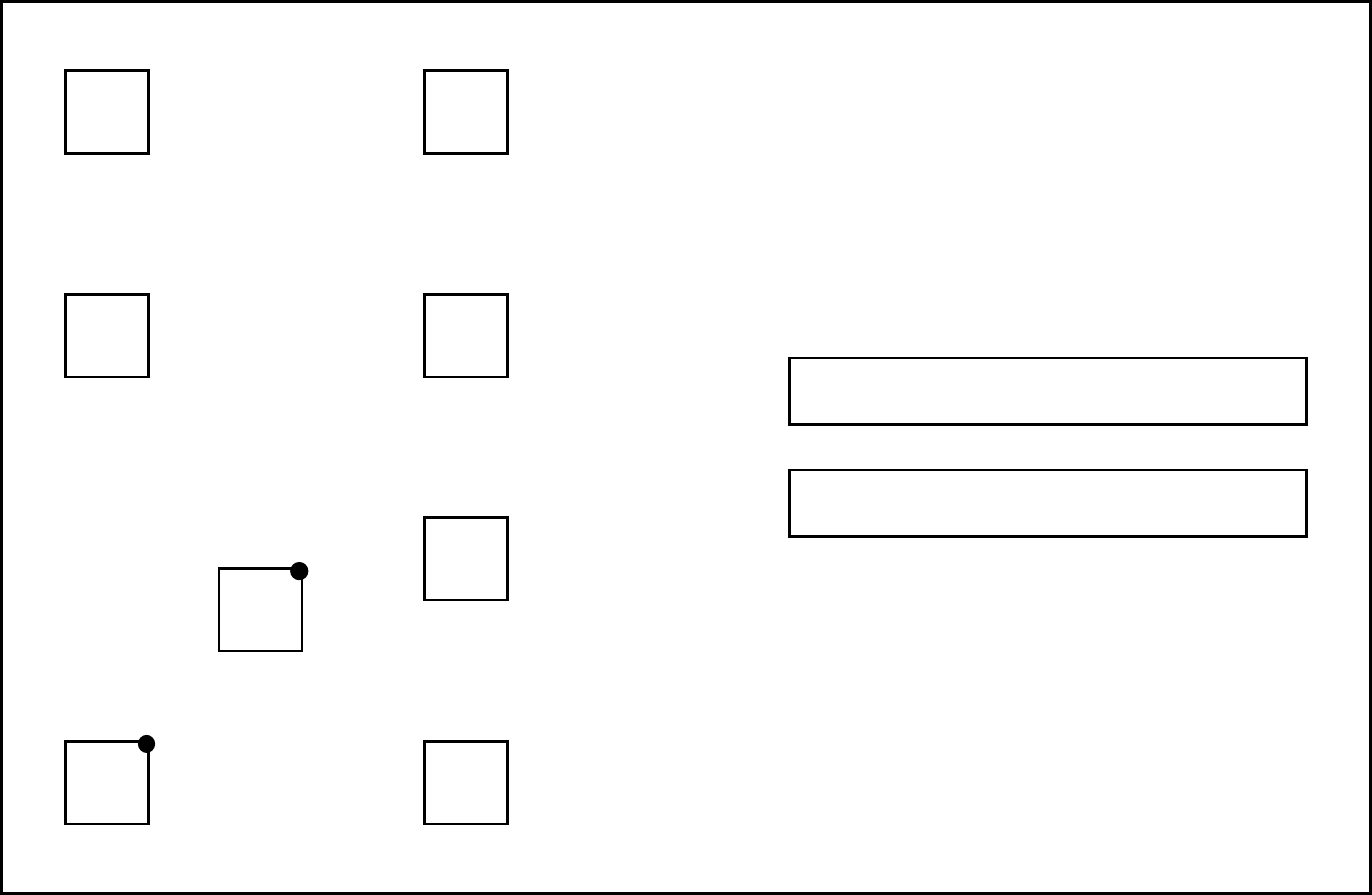}
		\caption{\ }
		\label{diffeo2}
	\end{subfigure}\hspace{.5cm}
	\begin{subfigure}{0.3\textwidth}
		\labellist
		\small\hair 2pt
		\pinlabel 1 at 34 231
		\pinlabel 1 at 140 231
		\pinlabel 2 at 34 165
		\pinlabel 2 at 140 165
		\pinlabel 3 at 77 55
		\pinlabel 3 at 140 99
		\pinlabel 4 at 33 78
		\pinlabel 4 at 140 33
		\pinlabel 5 at 310 149
		\pinlabel 5 at 310 115
		\pinlabel $\swarrow$ at -25 -25
		\endlabellist
		\includegraphics[width=\textwidth]{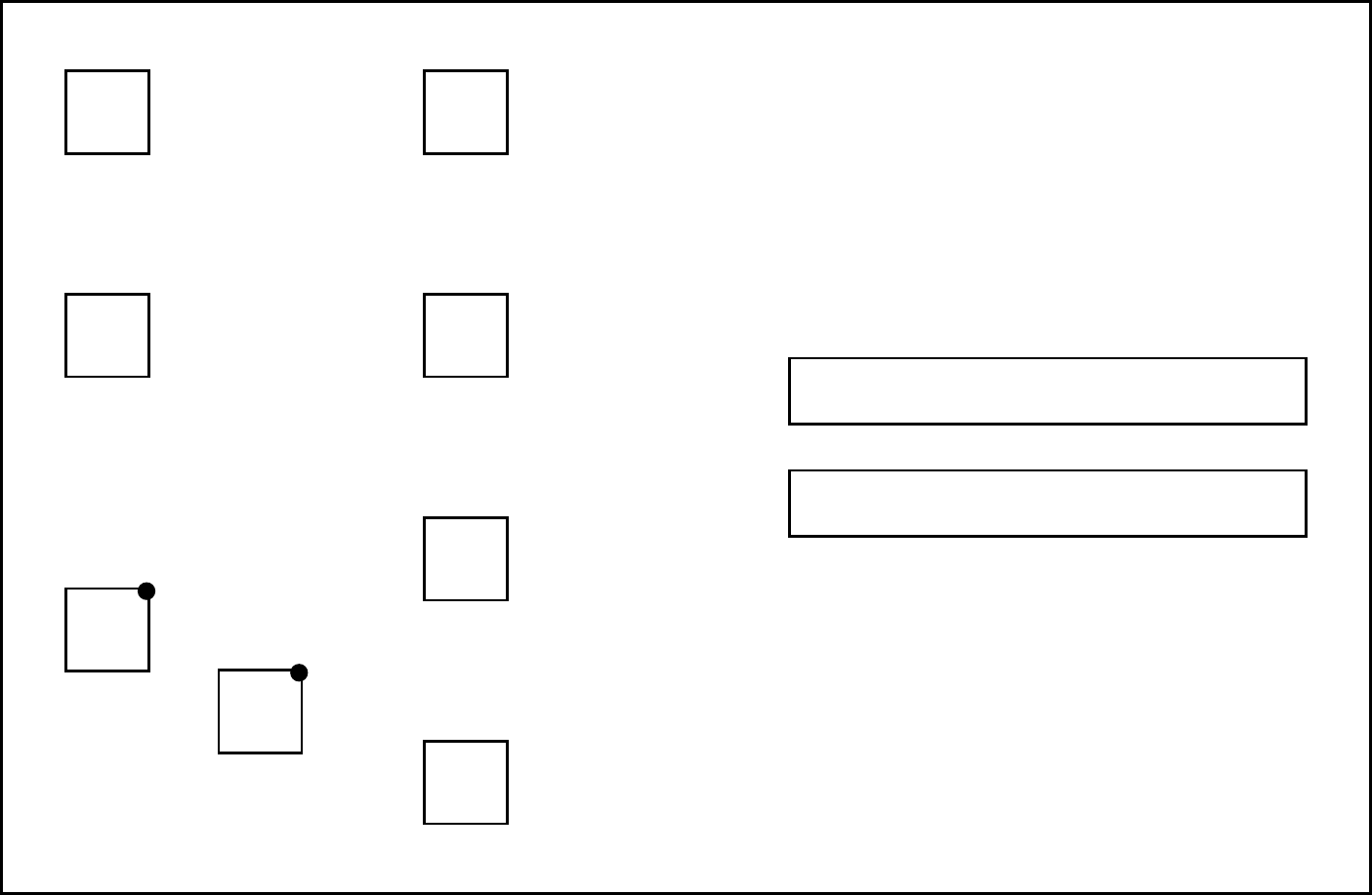}
		\caption{\ }
		\label{diffeo3}
	\end{subfigure}
	\begin{subfigure}{0.3\textwidth}
		\labellist
		\small\hair 2pt
		\pinlabel 1 at 34 231
		\pinlabel 1 at 140 231
		\pinlabel 2 at 34 165
		\pinlabel 2 at 140 165
		\pinlabel 4 at 34 99
		\pinlabel 3 at 140 99
		\pinlabel 3 at 34 33
		\pinlabel 4 at 140 33
		\pinlabel 5 at 310 149
		\pinlabel 5 at 310 115
		\endlabellist
		\includegraphics[width=\textwidth]{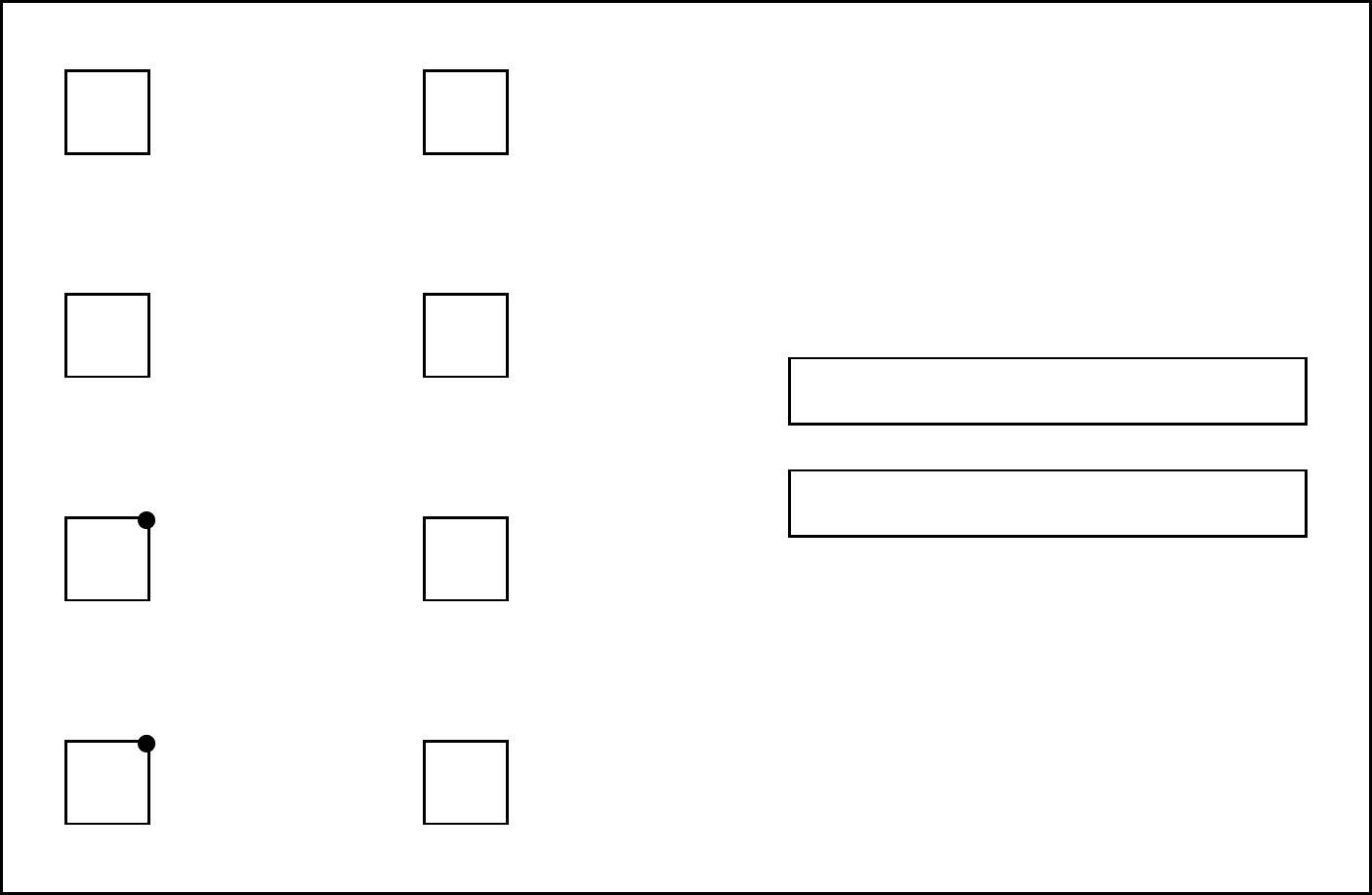}
		\caption{\ }
		\label{diffeo4}
	\end{subfigure}
\caption{Two diffeomorphisms which are used to simplify the depiction of the Lefschetz vanishing cycles for $\7$. The first is a negative Dehn twist about the circle $c$. The second moves the feet of the tubes 3 and 4 so that they switch places. The marked points are meant to illustrate how the squares do not rotate as they move.\label{diffeo}}
\end{figure}

\begin{figure}
\centering
\begin{subfigure}{0.23\textwidth}
\labellist
\small\hair 2pt
\pinlabel 6 at 300 195
\endlabellist
\includegraphics[width=\textwidth]{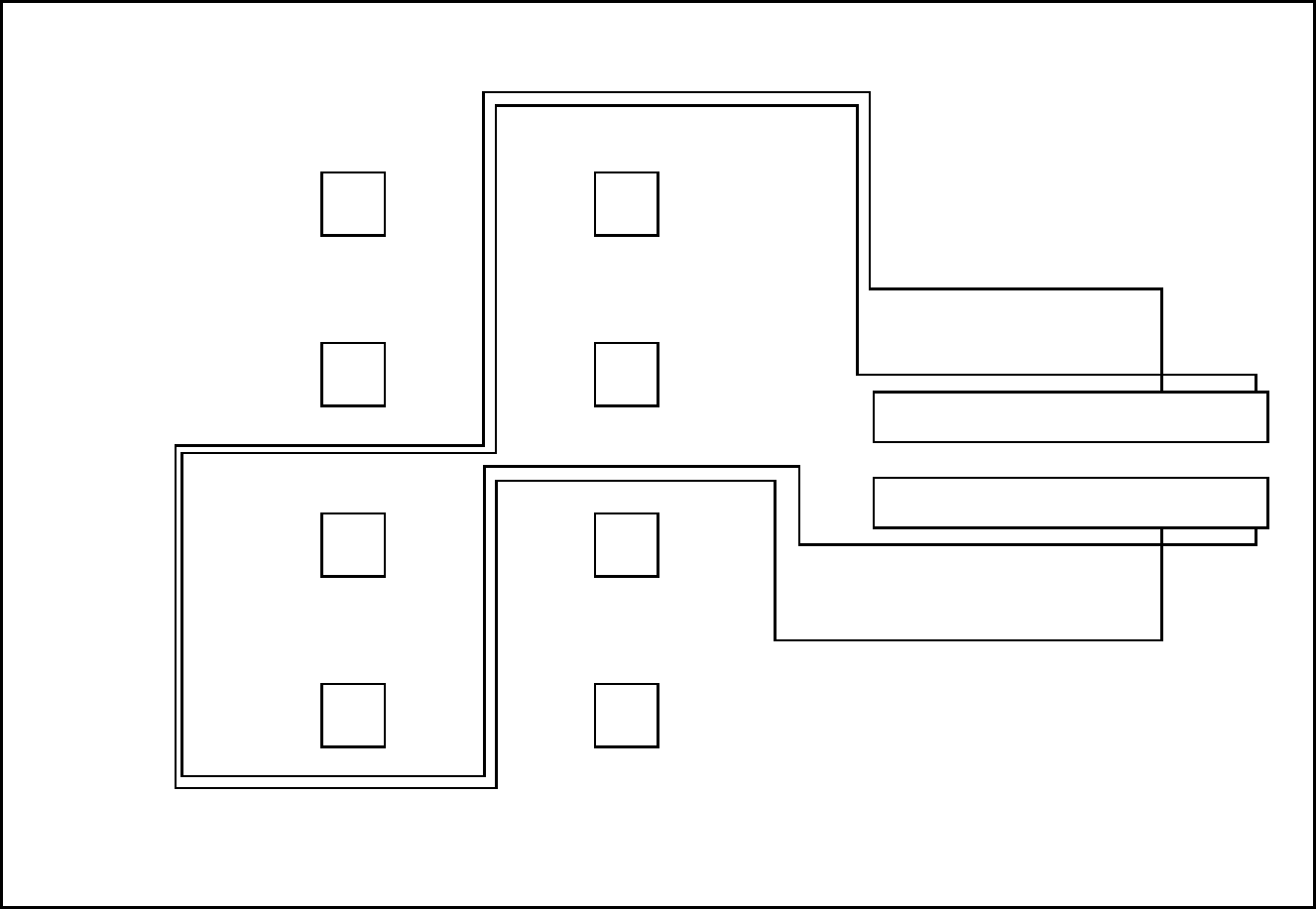}
\caption{Circles 1 and 6.}
\label{pc-7-6-1-6}
\end{subfigure}\hspace{.15cm}
\begin{subfigure}{0.23\textwidth}
\labellist
\small\hair 2pt
\pinlabel 7 at 270 200
\endlabellist
\includegraphics[width=\textwidth]{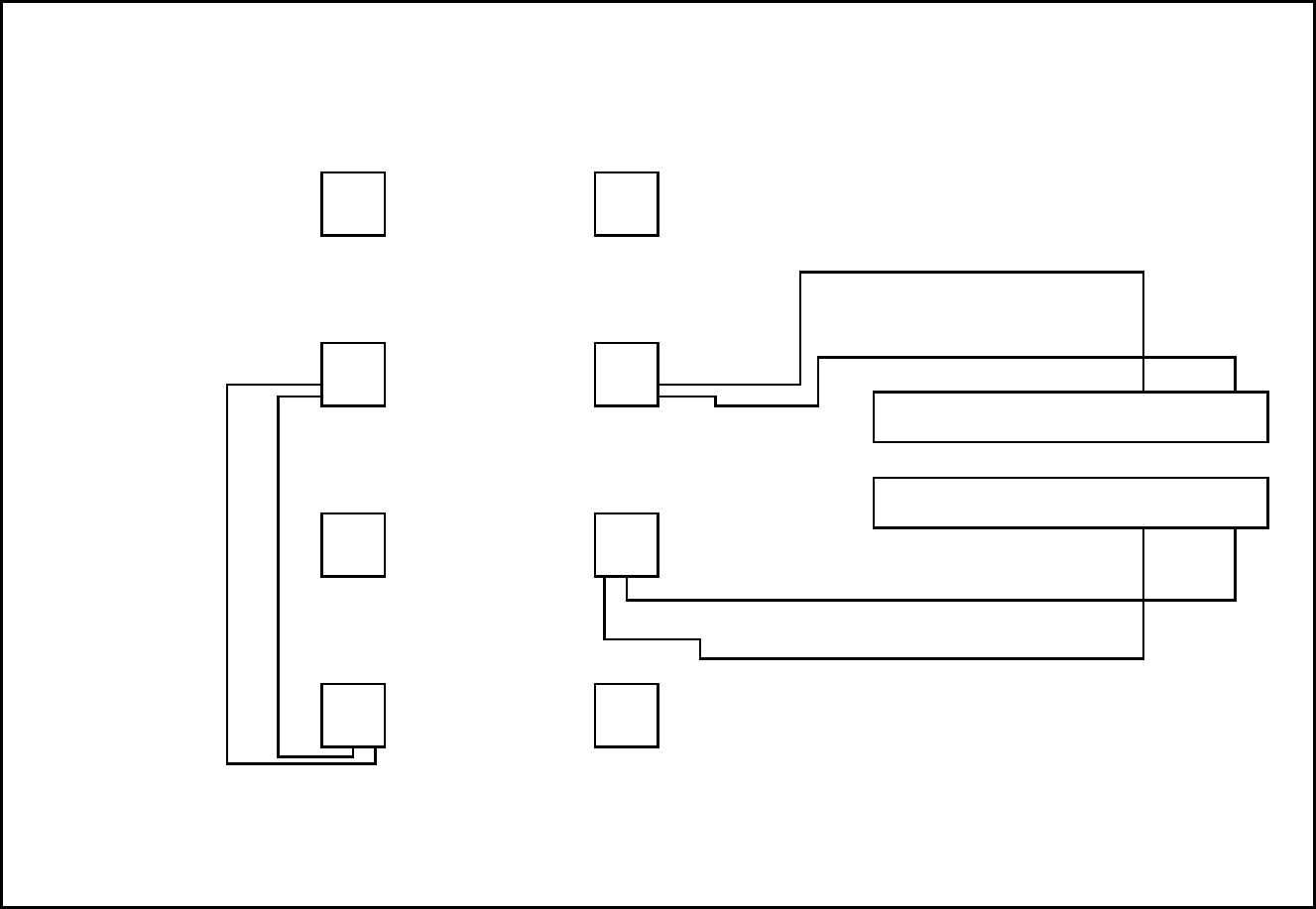}
\caption{Circles 2 and 7.}
\label{pc-7-6-2-7}
\end{subfigure}\hspace{.15cm}
\begin{subfigure}{0.23\textwidth}
\labellist
\small\hair 2pt
\pinlabel 8 at 270 205
\endlabellist
\includegraphics[width=\textwidth]{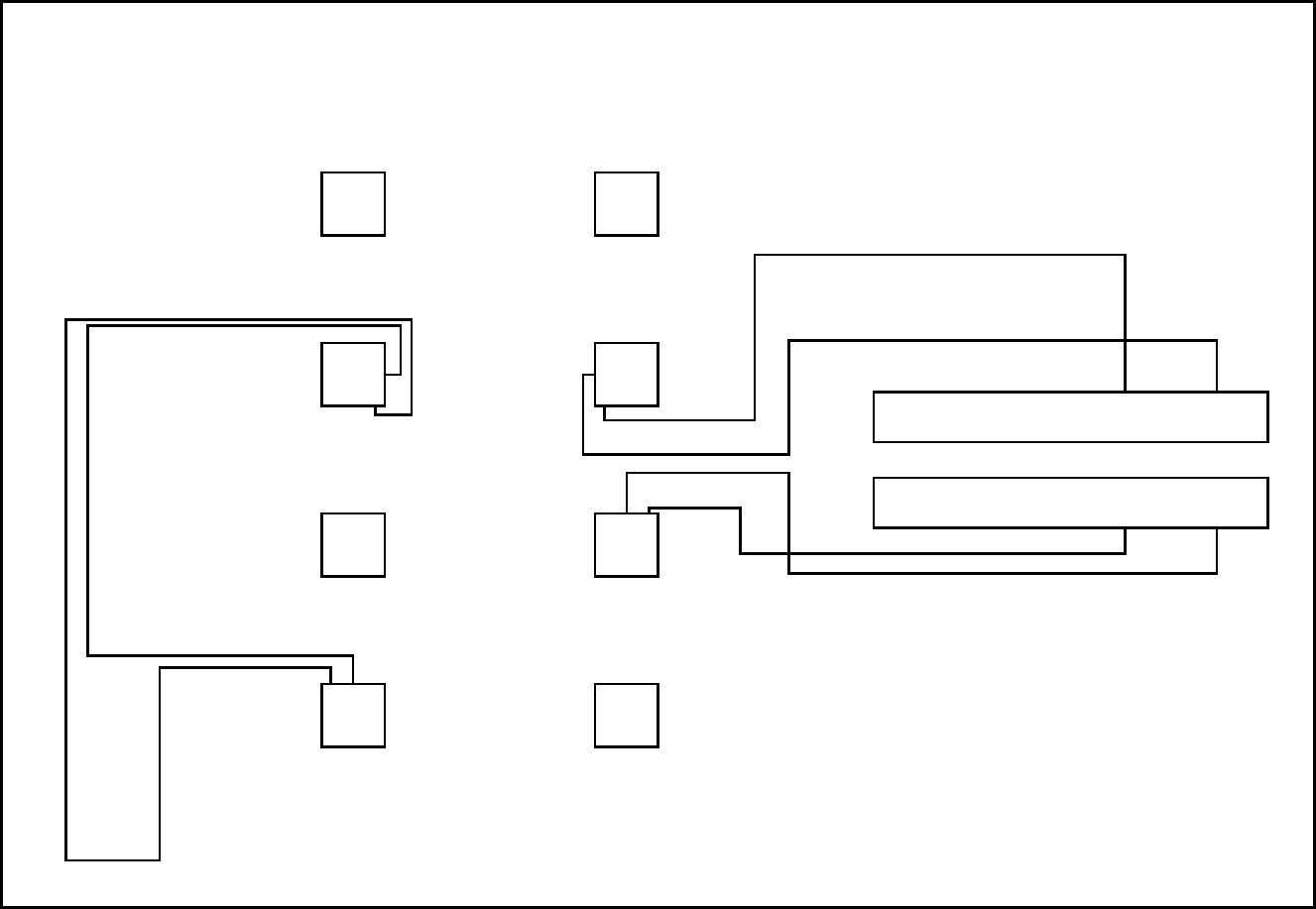}
\caption{Circles 3 and 8.}
\label{pc-7-6-3-8}
\end{subfigure}\hspace{.15cm}
\begin{subfigure}{0.23\textwidth}
\labellist
\small\hair 2pt
\pinlabel 9 at 270 210
\endlabellist
\includegraphics[width=\textwidth]{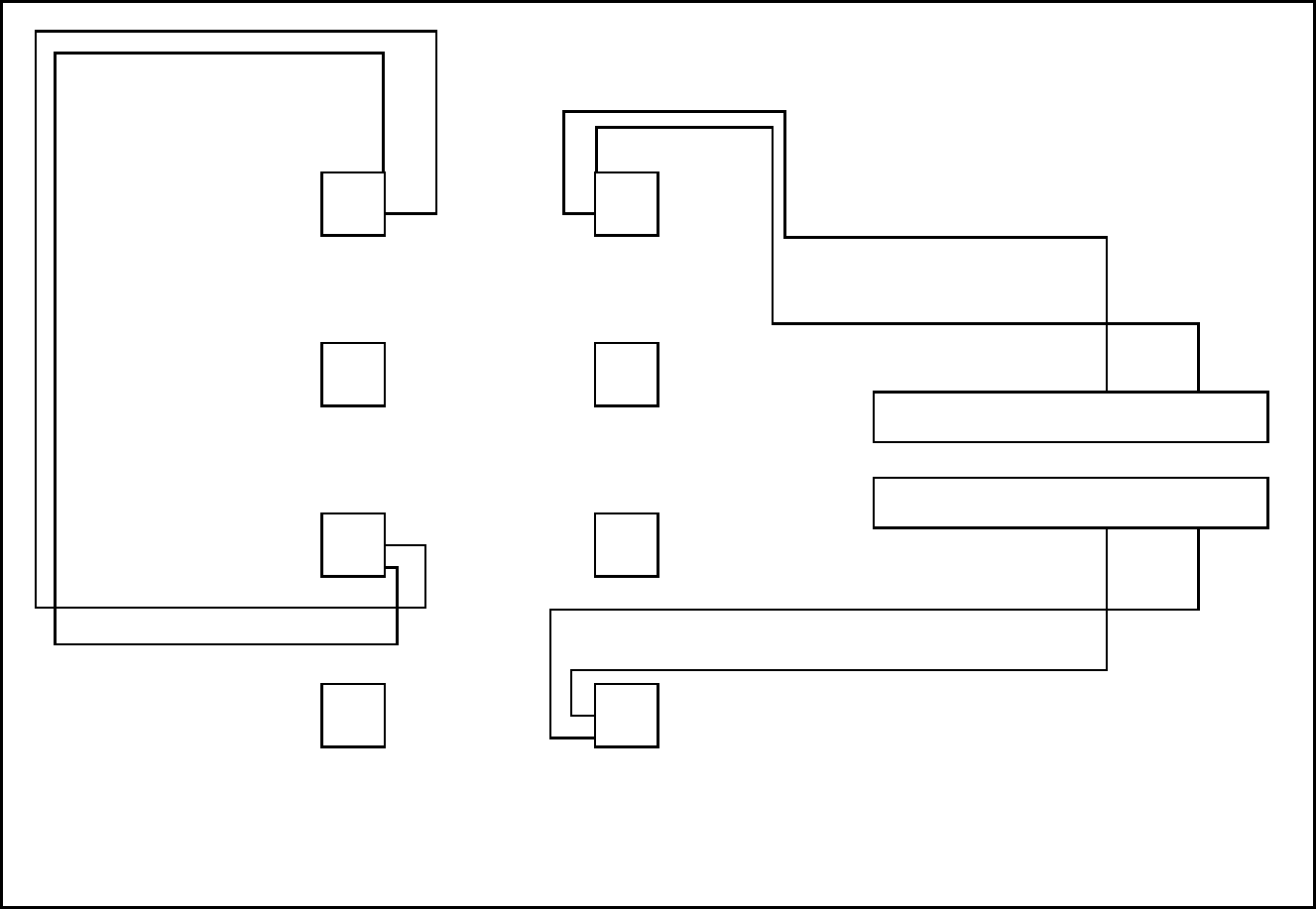}
\caption{Circles 4 and 9.}
\label{pc-7-6-4-9}
\end{subfigure}\vspace{.25cm}
\begin{subfigure}{0.23\textwidth}
\labellist
\small\hair 2pt
\pinlabel 10 at 270 215
\endlabellist
\includegraphics[width=\textwidth]{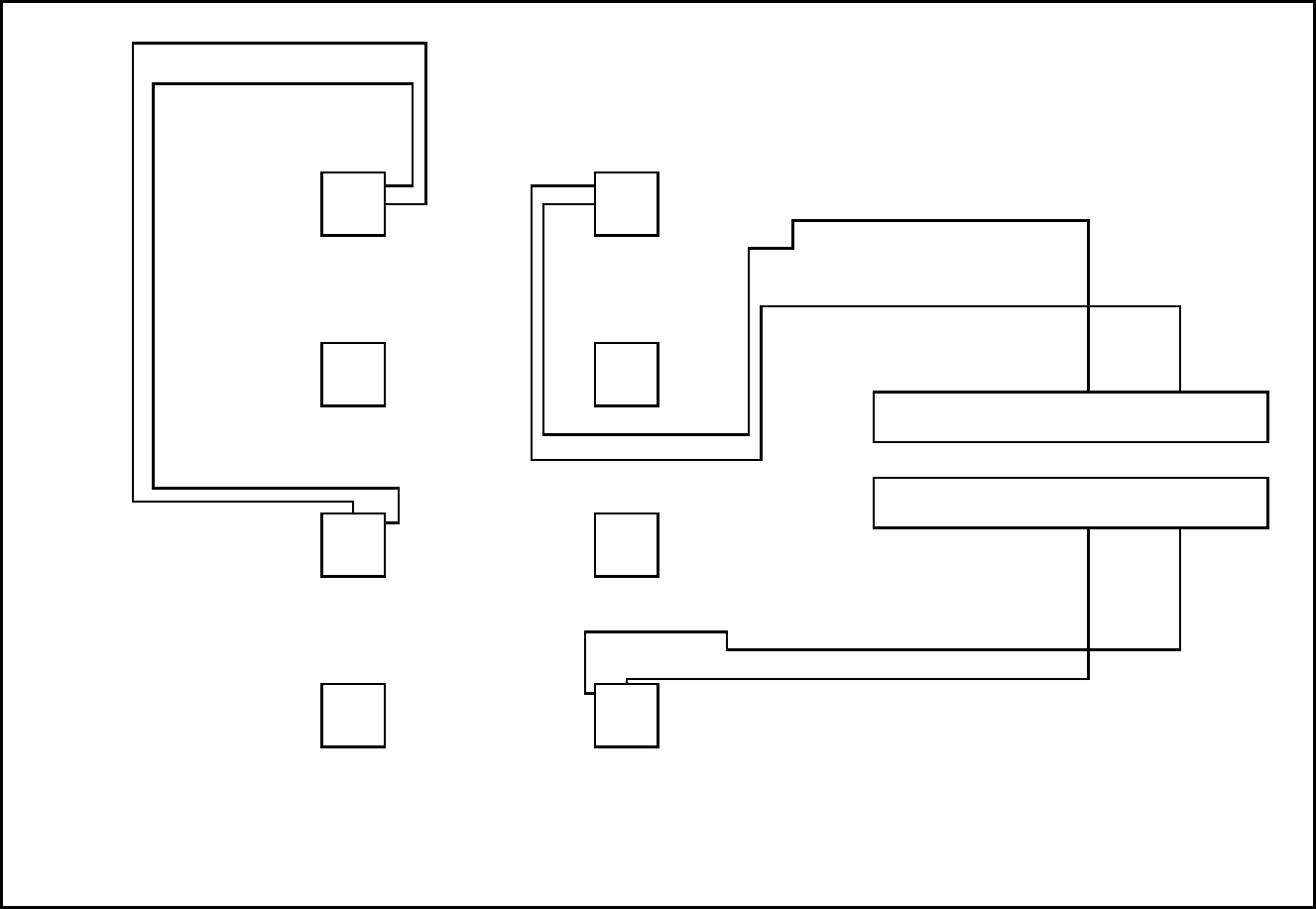}
\caption{Circles 5 and 10.}
\label{pc-7-6-5-10}
\end{subfigure}\hspace{.15cm}
\begin{subfigure}{0.23\textwidth}
\labellist
\small\hair 2pt
\pinlabel 11 at 330 175
\endlabellist
\includegraphics[width=\textwidth]{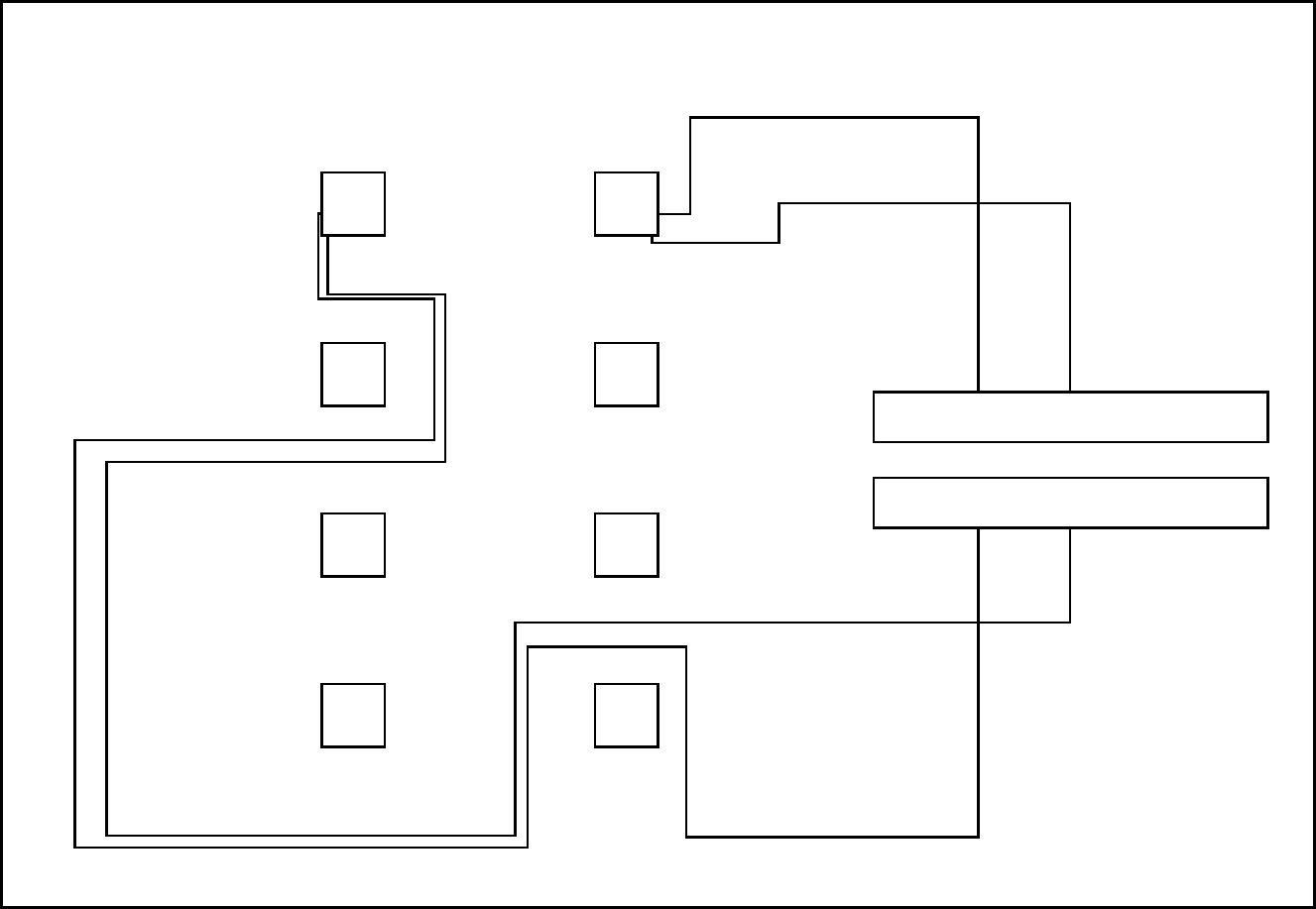}
\caption{Circles 11 and 16.}
\label{pc-7-6-11-16}
\end{subfigure}\hspace{.15cm}
\begin{subfigure}{0.23\textwidth}
\labellist
\small\hair 2pt
\pinlabel 12 at 325 175
\endlabellist
\includegraphics[width=\textwidth]{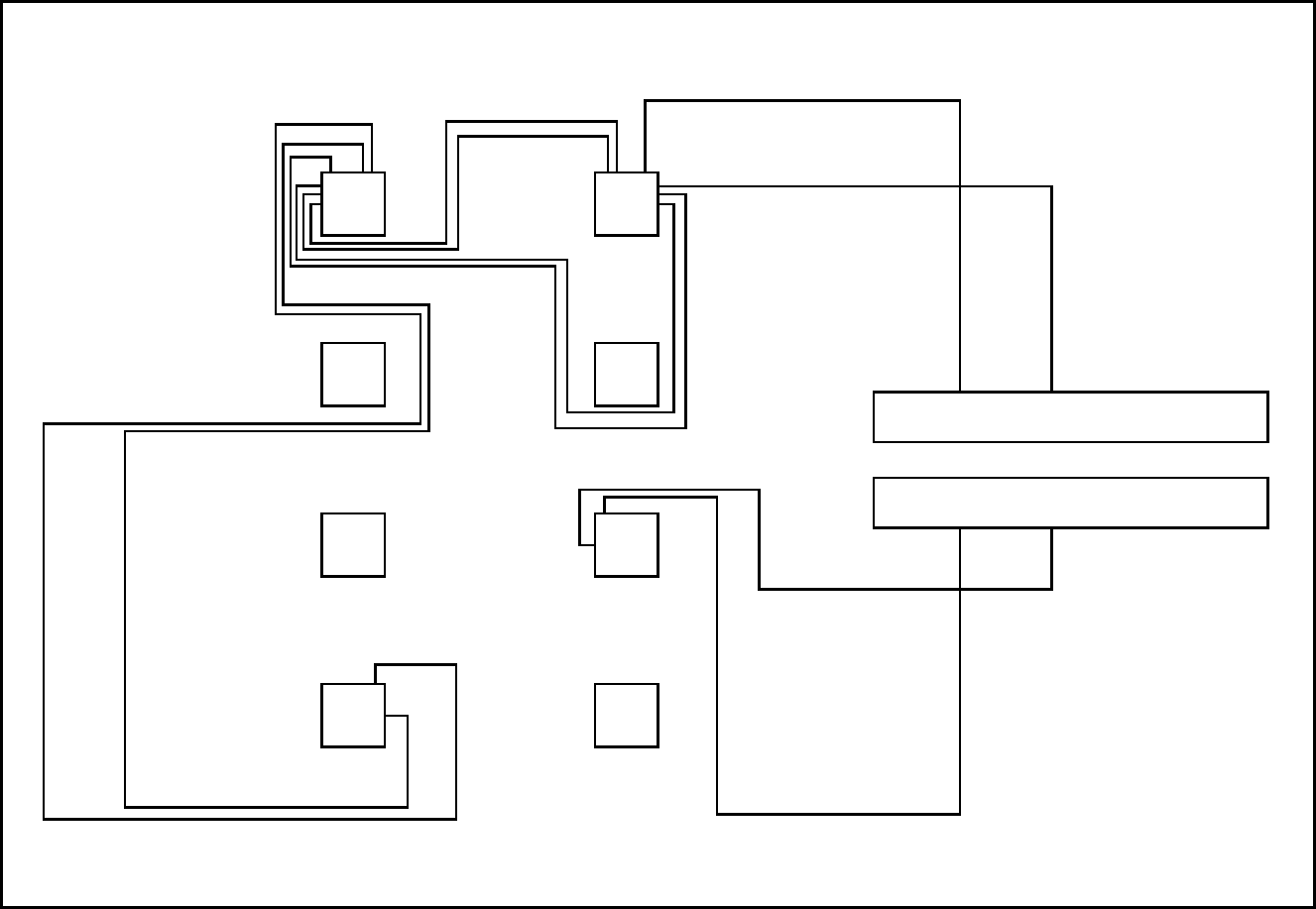}
\caption{Circles 12 and 17.}
\label{pc-7-6-12-17}
\end{subfigure}\hspace{.15cm}
\begin{subfigure}{0.23\textwidth}
\labellist
\small\hair 2pt
\pinlabel 13 at 320 175
\endlabellist
\includegraphics[width=\textwidth]{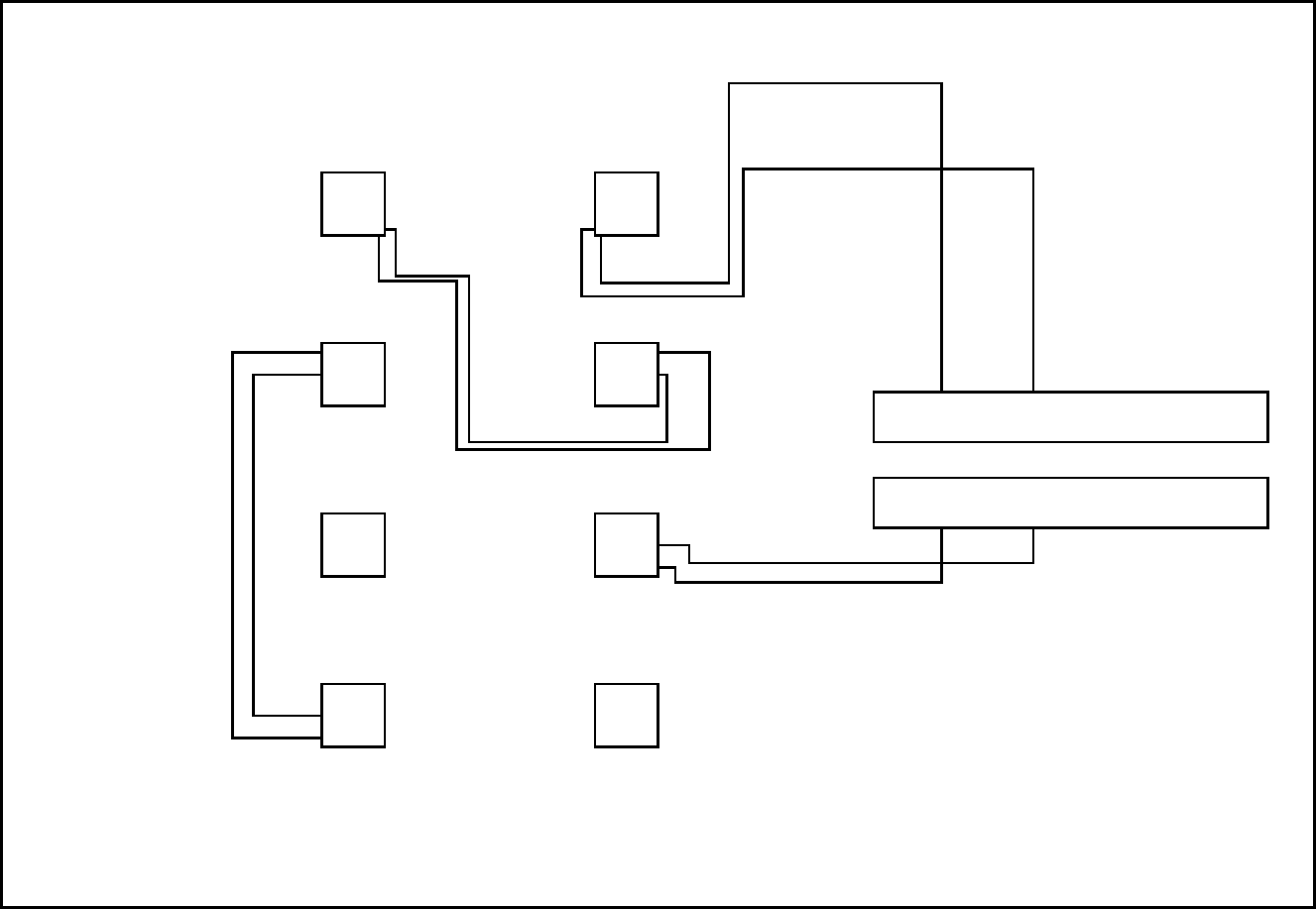}
\caption{Circles 13 and 18.}
\label{pc-7-6-13-18}
\end{subfigure}\vspace{.25cm}
\begin{subfigure}{0.23\textwidth}
\labellist
\small\hair 2pt
\pinlabel 14 at 315 175
\endlabellist
\includegraphics[width=\textwidth]{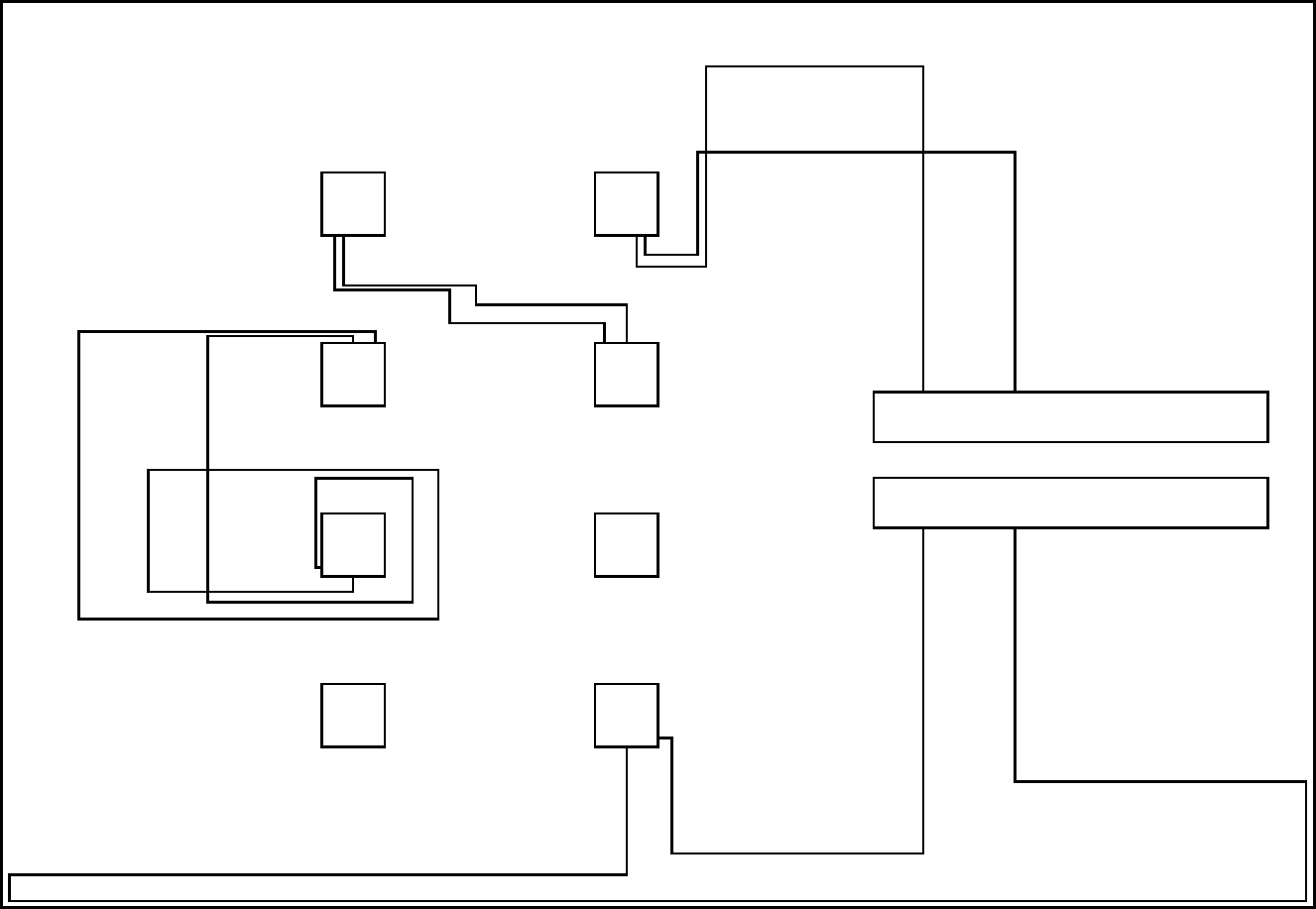}
\caption{Circles 14 and 19.}
\label{pc-7-6-14-19}
\end{subfigure}\hspace{.15cm}
\begin{subfigure}{0.23\textwidth}
\labellist
\small\hair 2pt
\pinlabel 15 at 310 175
\endlabellist
\includegraphics[width=\textwidth]{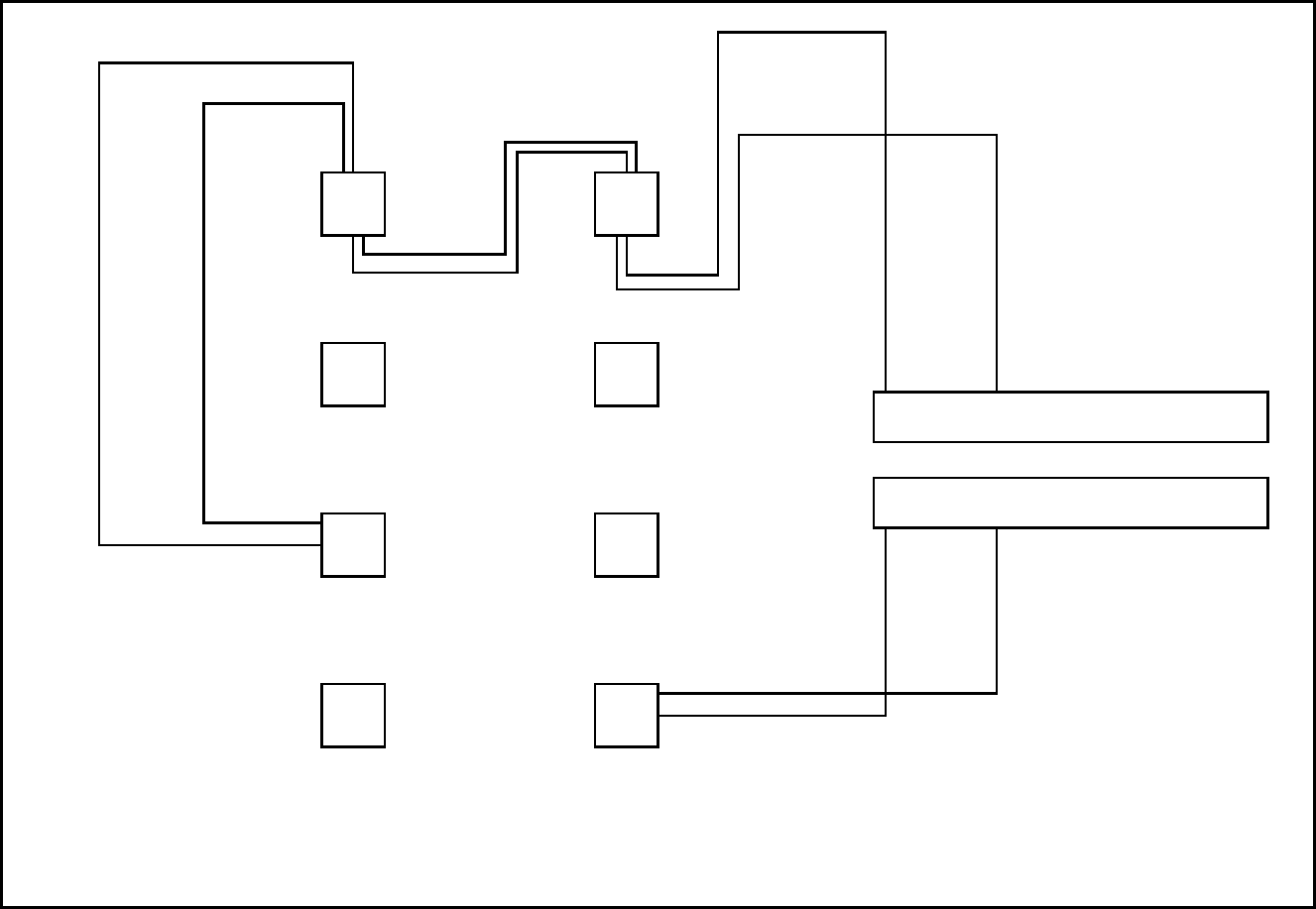}
\caption{Circles 15 and 21.}
\label{pc-7-6-15-21}
\end{subfigure}\hspace{.15cm}
\begin{subfigure}{0.23\textwidth}
\labellist
\small\hair 2pt
\pinlabel 20 at 30 30
\endlabellist
\includegraphics[width=\textwidth]{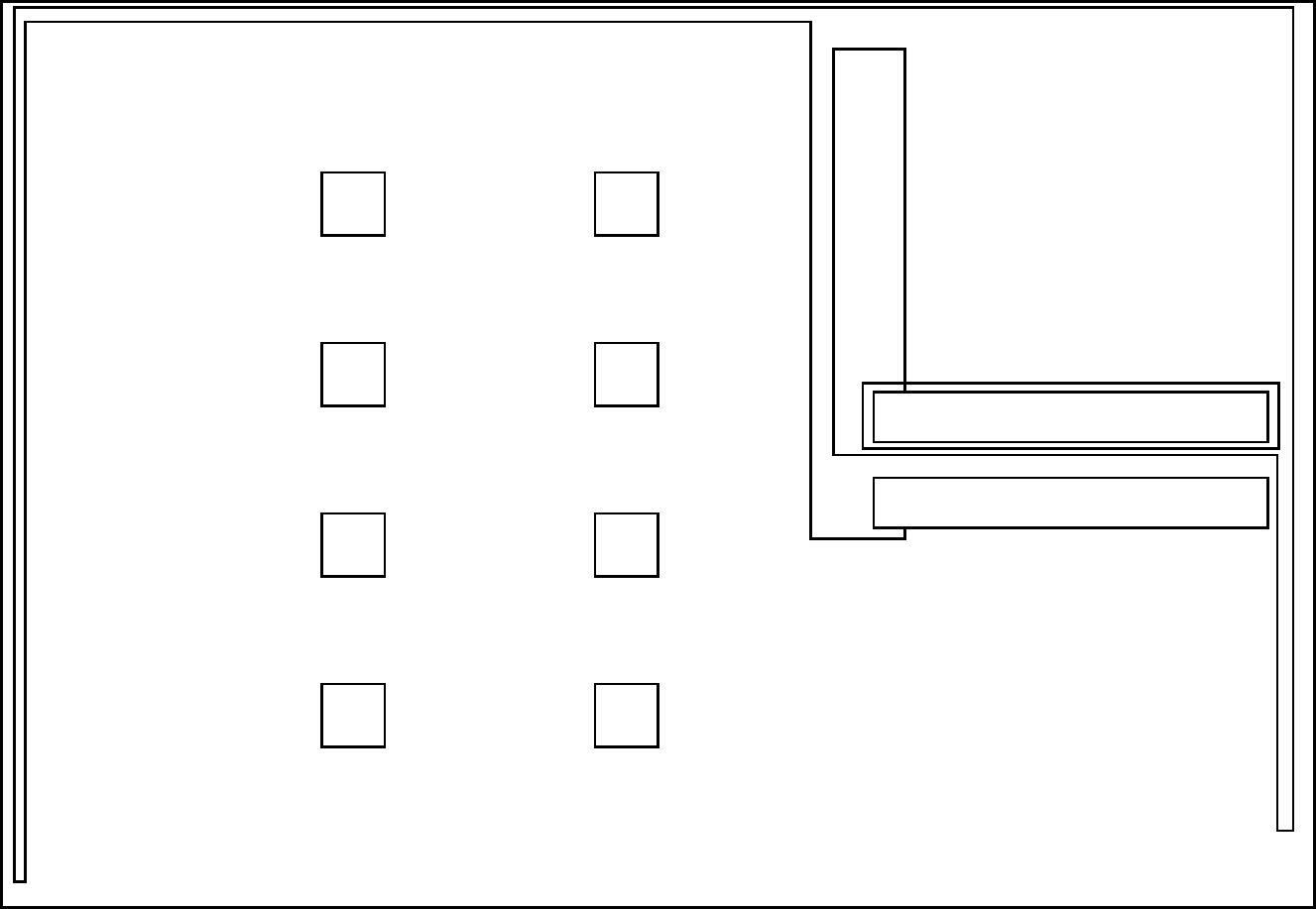}
\caption{Circles 20 and 22.}
\label{pc-7-6-20-22}
\end{subfigure}\hspace{.15cm}
\begin{subfigure}{0.23\textwidth}
\includegraphics[width=\textwidth]{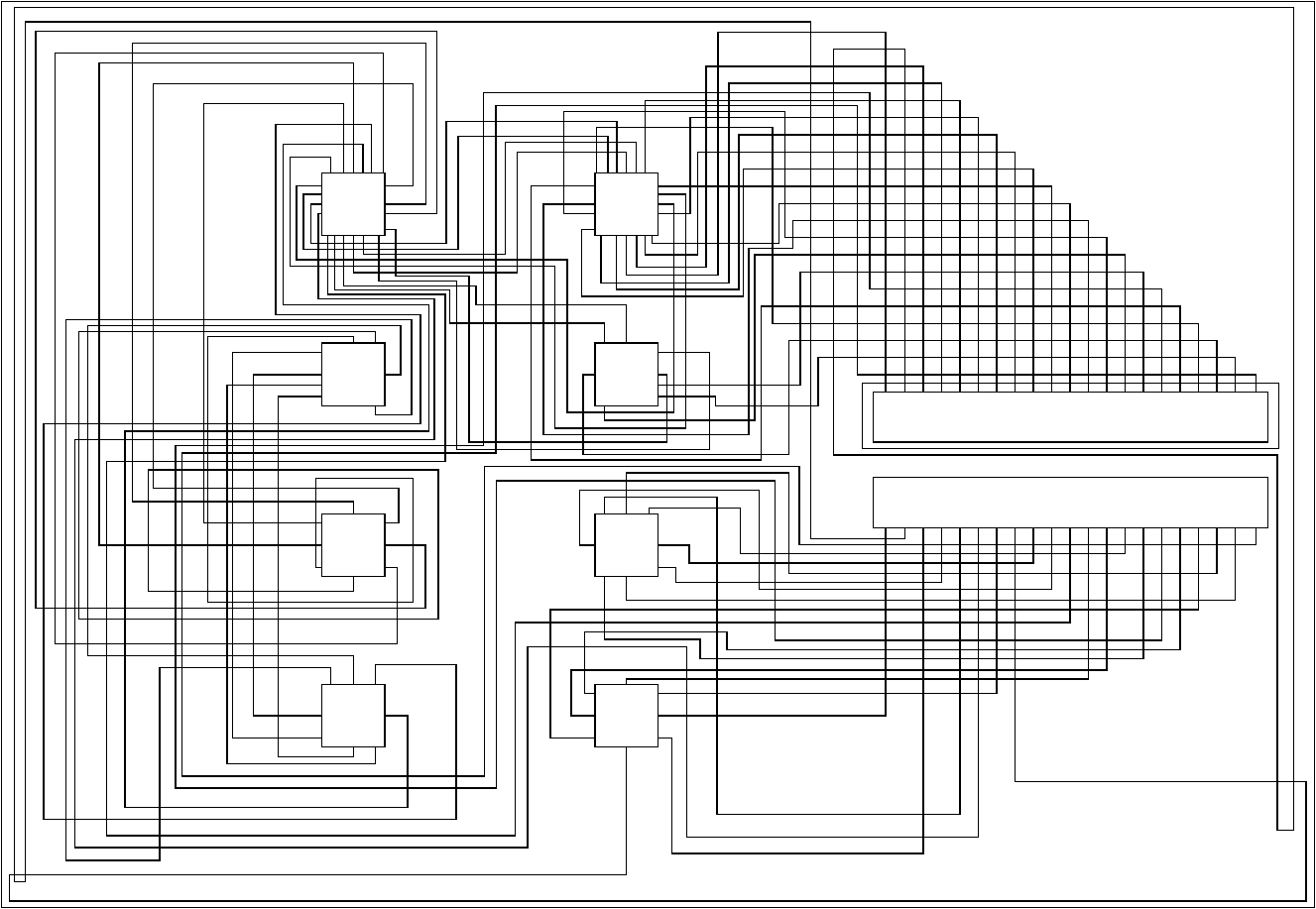}
\caption{All circles.}
\label{pc-7-6-all}
\end{subfigure}
	\caption{A pseudocoronation of $\7$. One of the two circles is labeled in each diagram except the last.\label{pc-7-6}}
\end{figure}

\begin{figure}
	\centering
	\begin{subfigure}{0.23\textwidth}
		\labellist
		\small\hair 2pt
		\pinlabel 6 at 300 195
		\endlabellist
		\includegraphics[width=\textwidth]{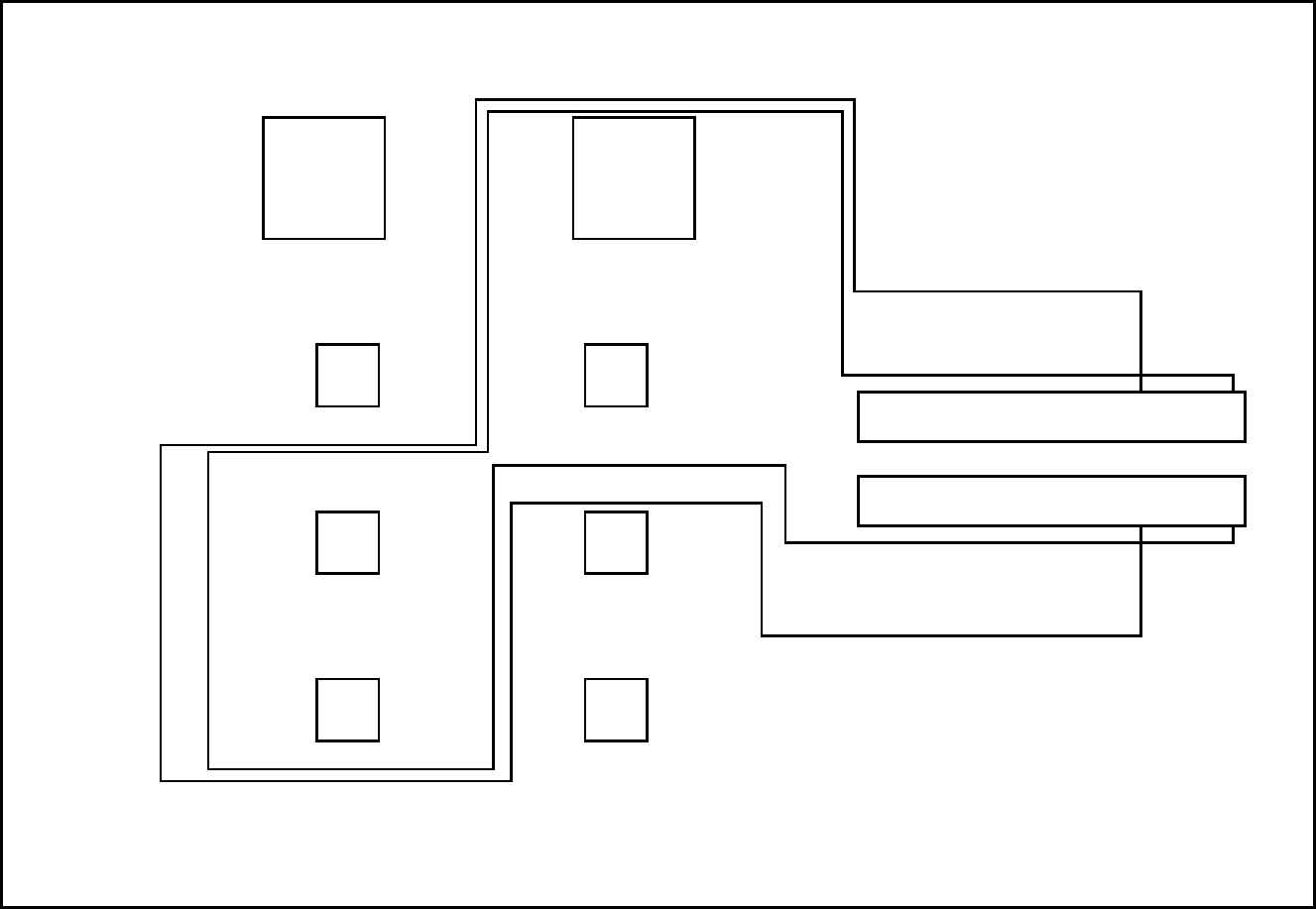}
		\caption{Circles 1 and 6.}
		\label{pc-10-133-1-6}
	\end{subfigure}\hspace{.15cm}
	\begin{subfigure}{0.23\textwidth}
		\labellist
		\small\hair 2pt
		\pinlabel 7 at 270 200
		\endlabellist
		\includegraphics[width=\textwidth]{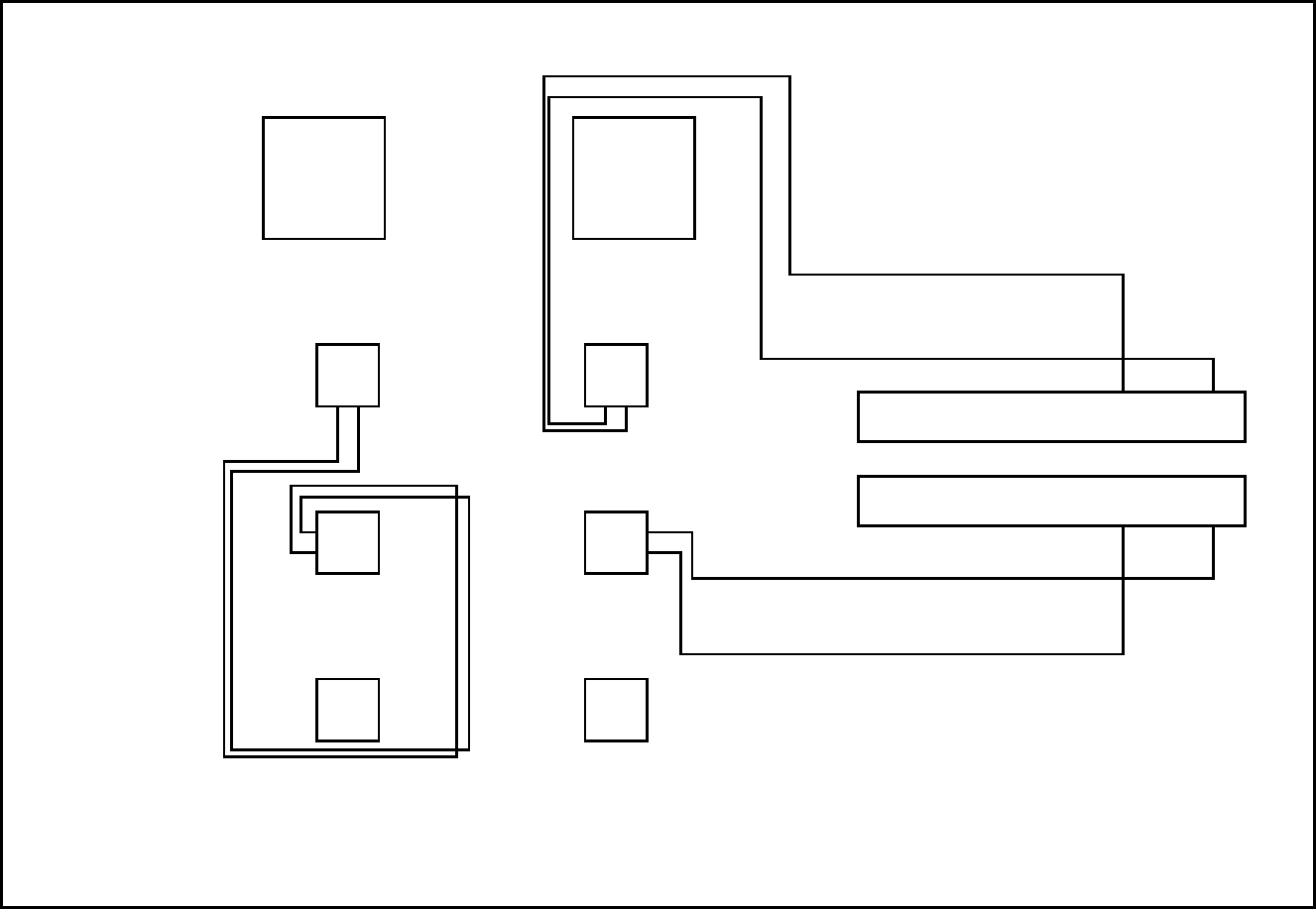}
		\caption{Circles 2 and 7.}
		\label{pc-10-133-2-7}
	\end{subfigure}\hspace{.15cm}
	\begin{subfigure}{0.23\textwidth}
		\labellist
		\small\hair 2pt
		\pinlabel 8 at 270 205
		\endlabellist
		\includegraphics[width=\textwidth]{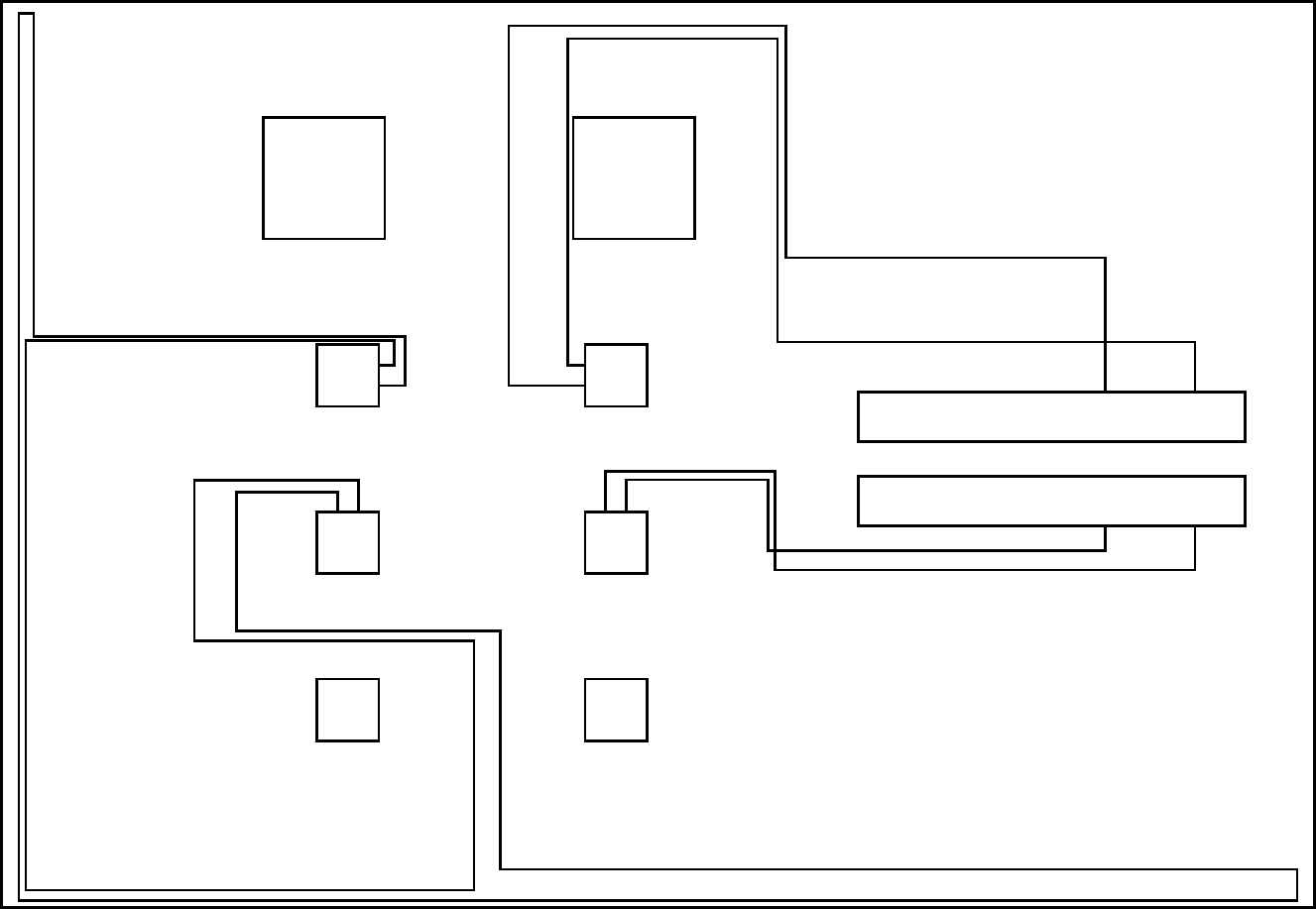}
		\caption{Circles 3 and 8.}
		\label{pc-10-133-3-8}
	\end{subfigure}\hspace{.15cm}
	\begin{subfigure}{0.23\textwidth}
		\labellist
		\small\hair 2pt
		\pinlabel 9 at 270 210
		\endlabellist
		\includegraphics[width=\textwidth]{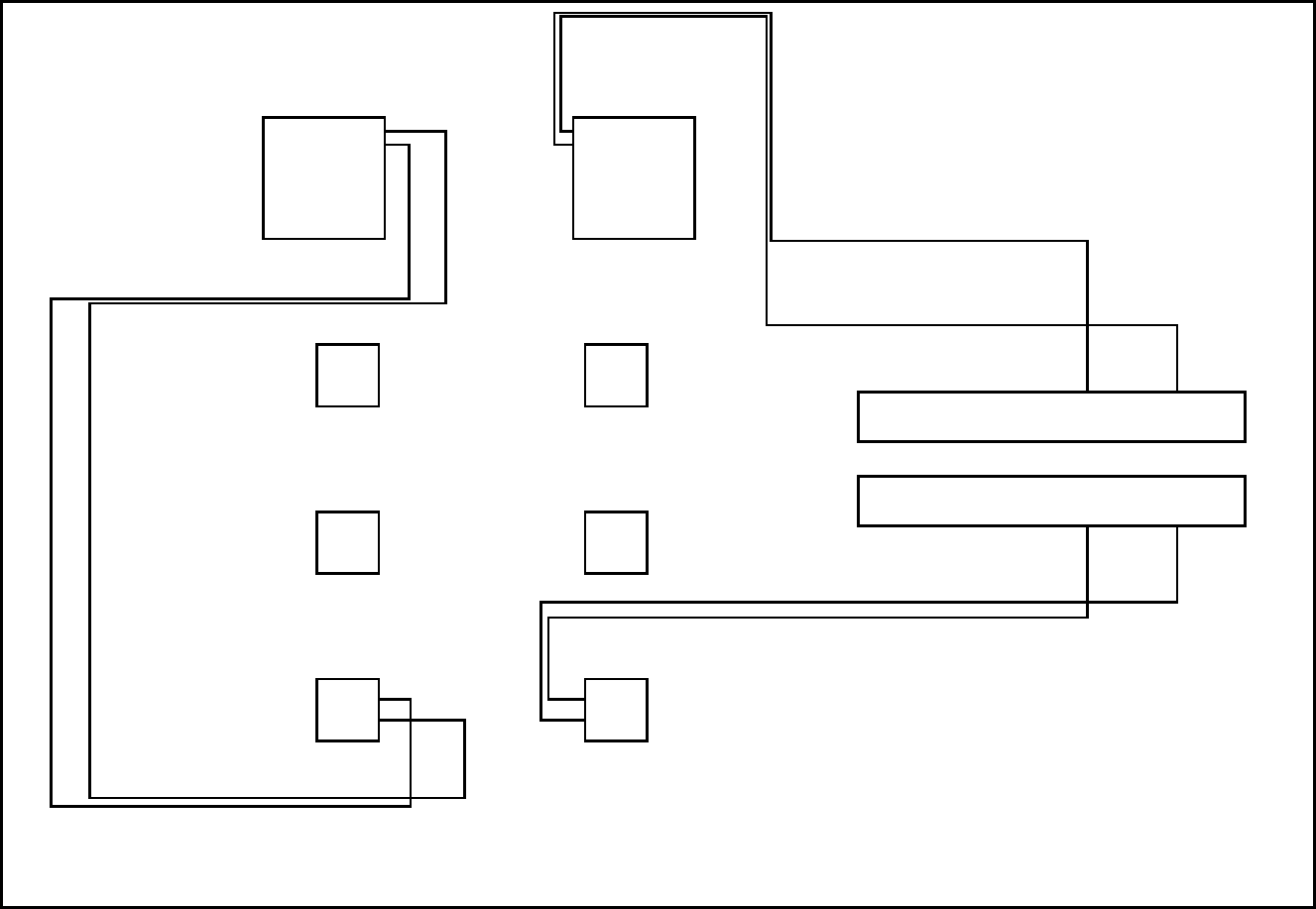}
		\caption{Circles 4 and 9.}
		\label{pc-10-133-4-9}
	\end{subfigure}\vspace{.25cm}
	\begin{subfigure}{0.23\textwidth}
		\labellist
		\small\hair 2pt
		\pinlabel 10 at 270 215
		\endlabellist
		\includegraphics[width=\textwidth]{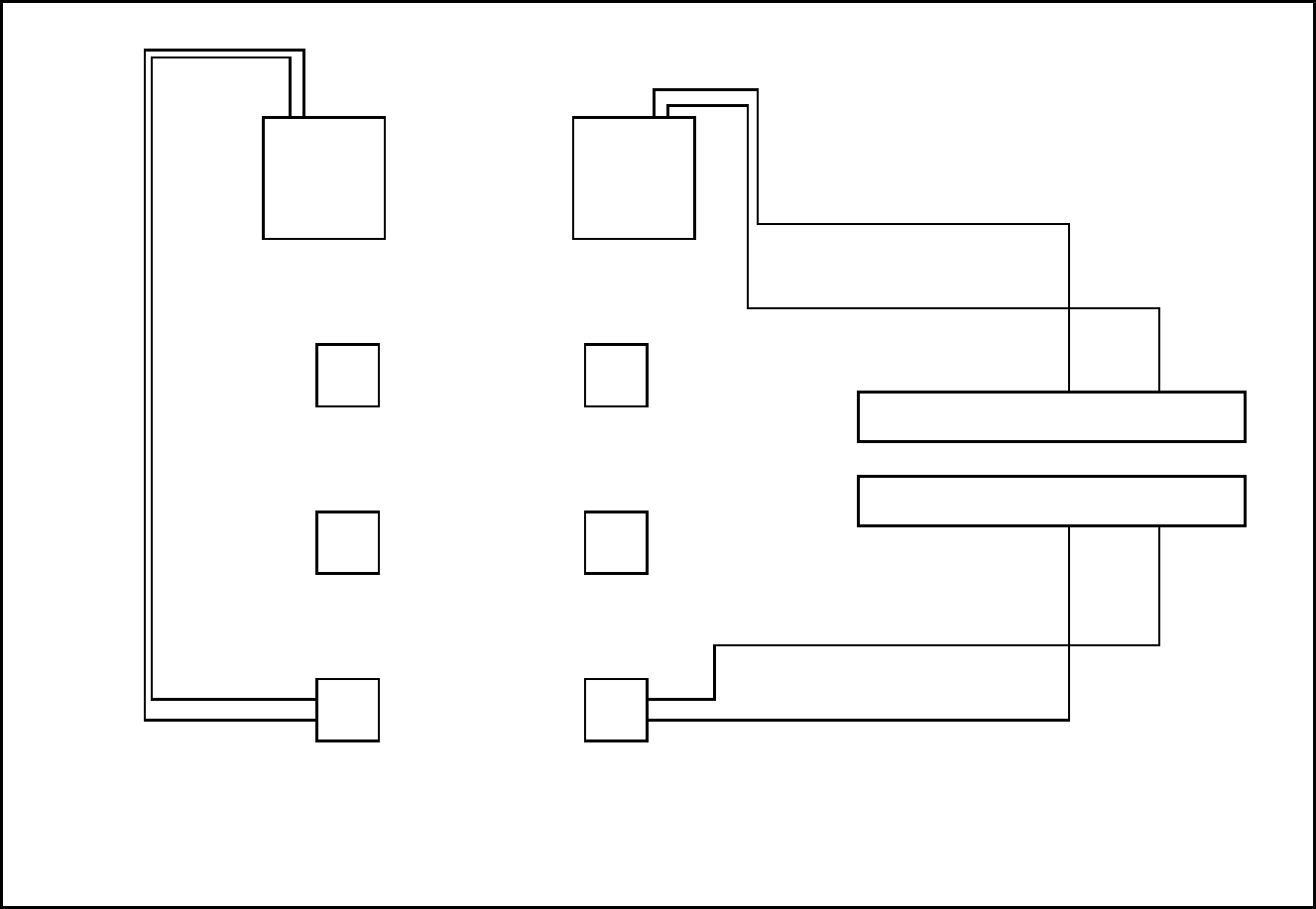}
		\caption{Circles 5 and 10.}
		\label{pc-10-133-5-10}
	\end{subfigure}\hspace{.15cm}
	\begin{subfigure}{0.23\textwidth}
		\labellist
		\small\hair 2pt
		\pinlabel 11 at 330 175
		\endlabellist
		\includegraphics[width=\textwidth]{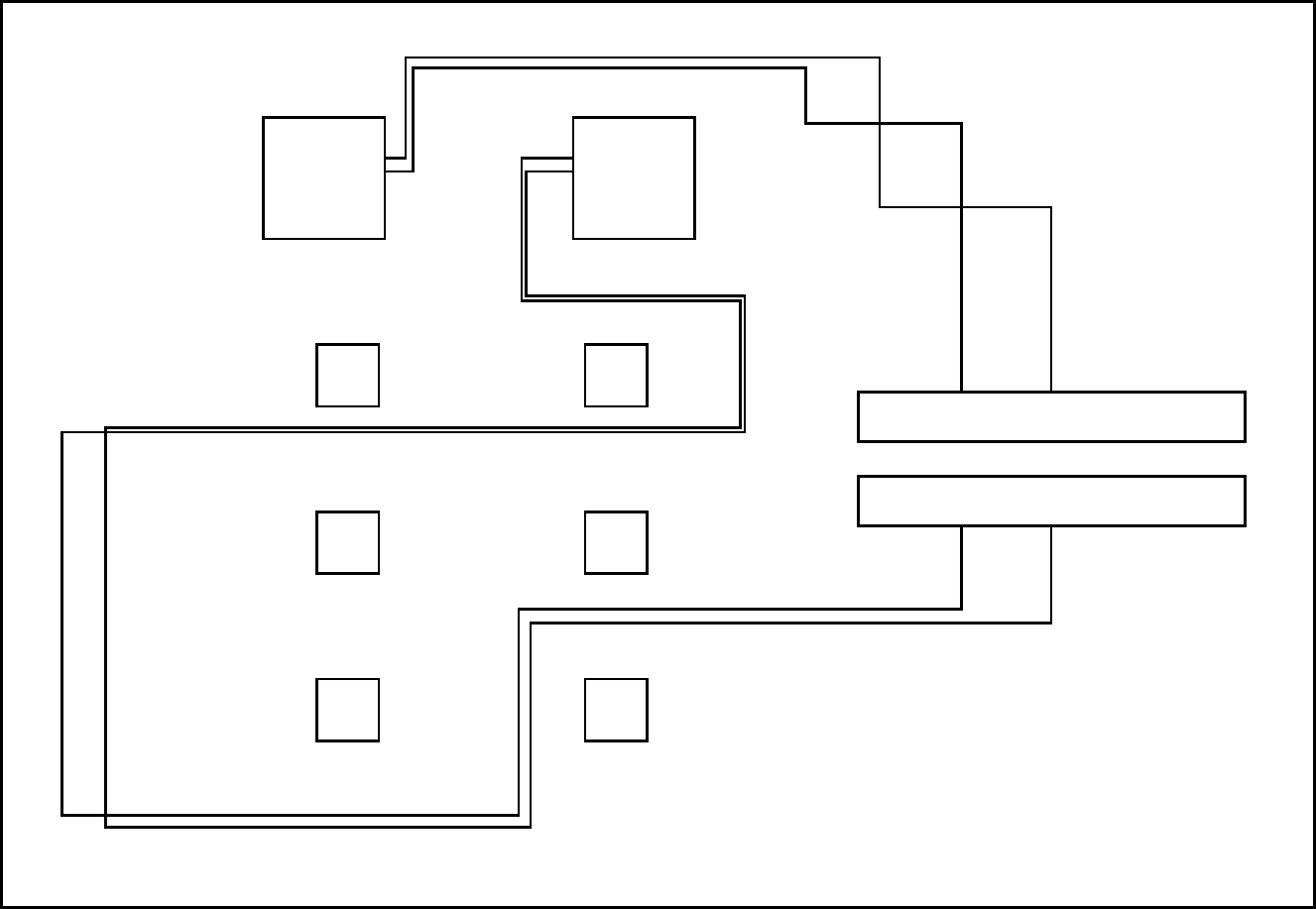}
		\caption{Circles 11 and 16.}
		\label{pc-10-133-11-16}
	\end{subfigure}\hspace{.15cm}
	\begin{subfigure}{0.23\textwidth}
		\labellist
		\small\hair 2pt
		\pinlabel 12 at 325 175
		\endlabellist
		\includegraphics[width=\textwidth]{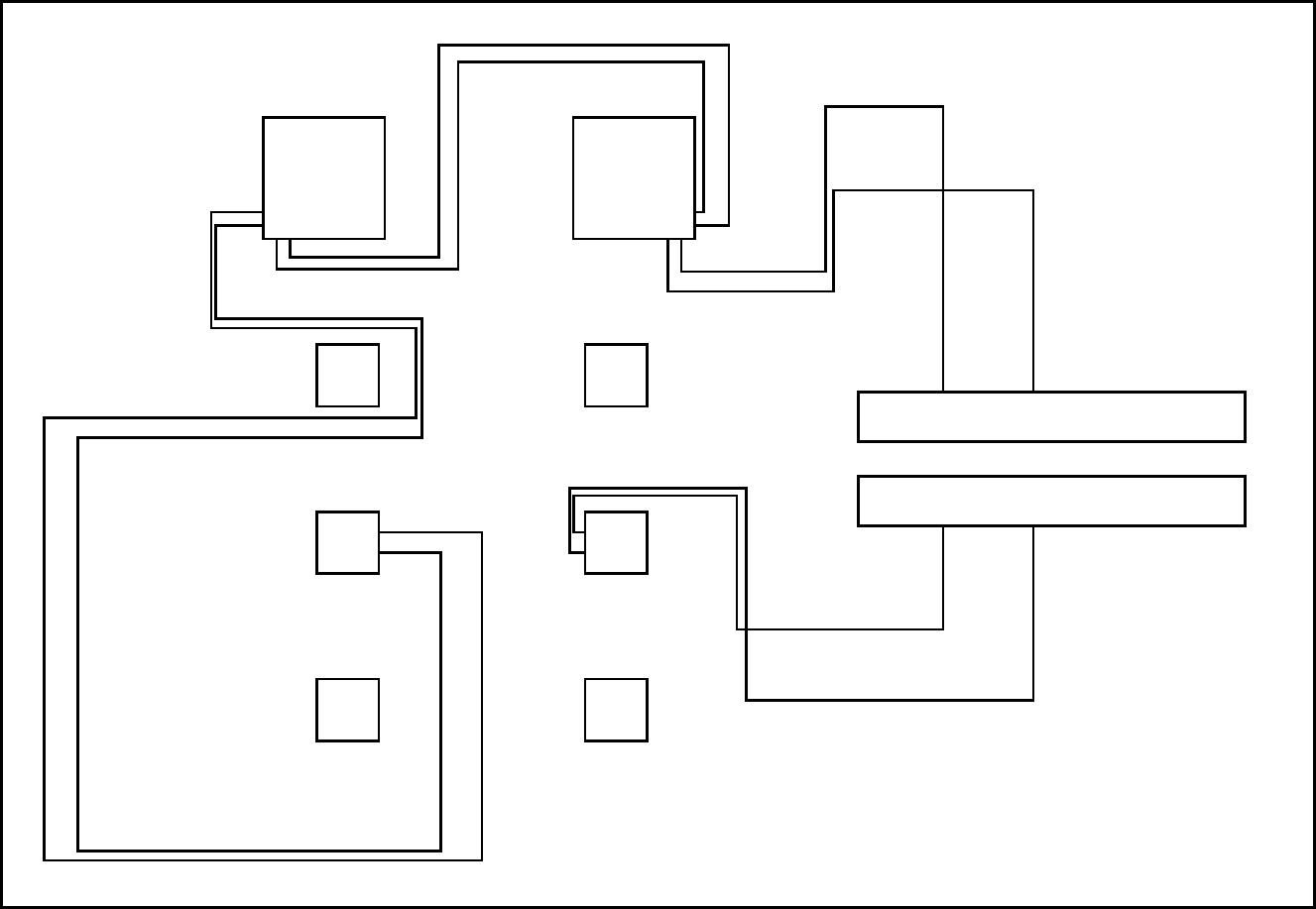}
		\caption{Circles 12 and 17.}
		\label{pc-10-133-12-17}
	\end{subfigure}\hspace{.15cm}
	\begin{subfigure}{0.23\textwidth}
		\labellist
		\small\hair 2pt
		\pinlabel 13 at 320 175
		\endlabellist
		\includegraphics[width=\textwidth]{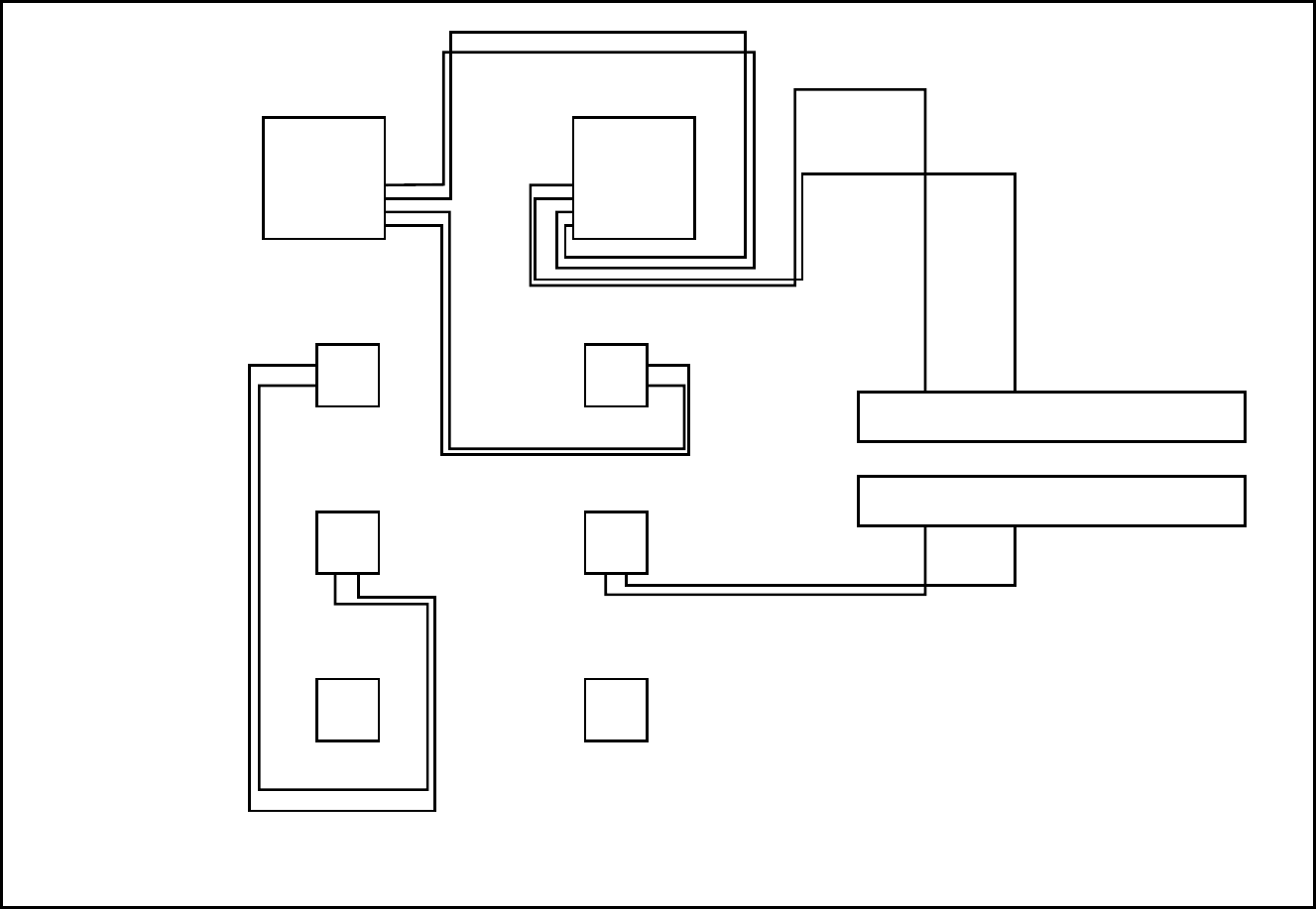}
		\caption{Circles 13 and 18.}
		\label{pc-10-133-13-18}
	\end{subfigure}\vspace{.25cm}
	\begin{subfigure}{0.23\textwidth}
		\labellist
		\small\hair 2pt
		\pinlabel 14 at 315 175
		\endlabellist
		\includegraphics[width=\textwidth]{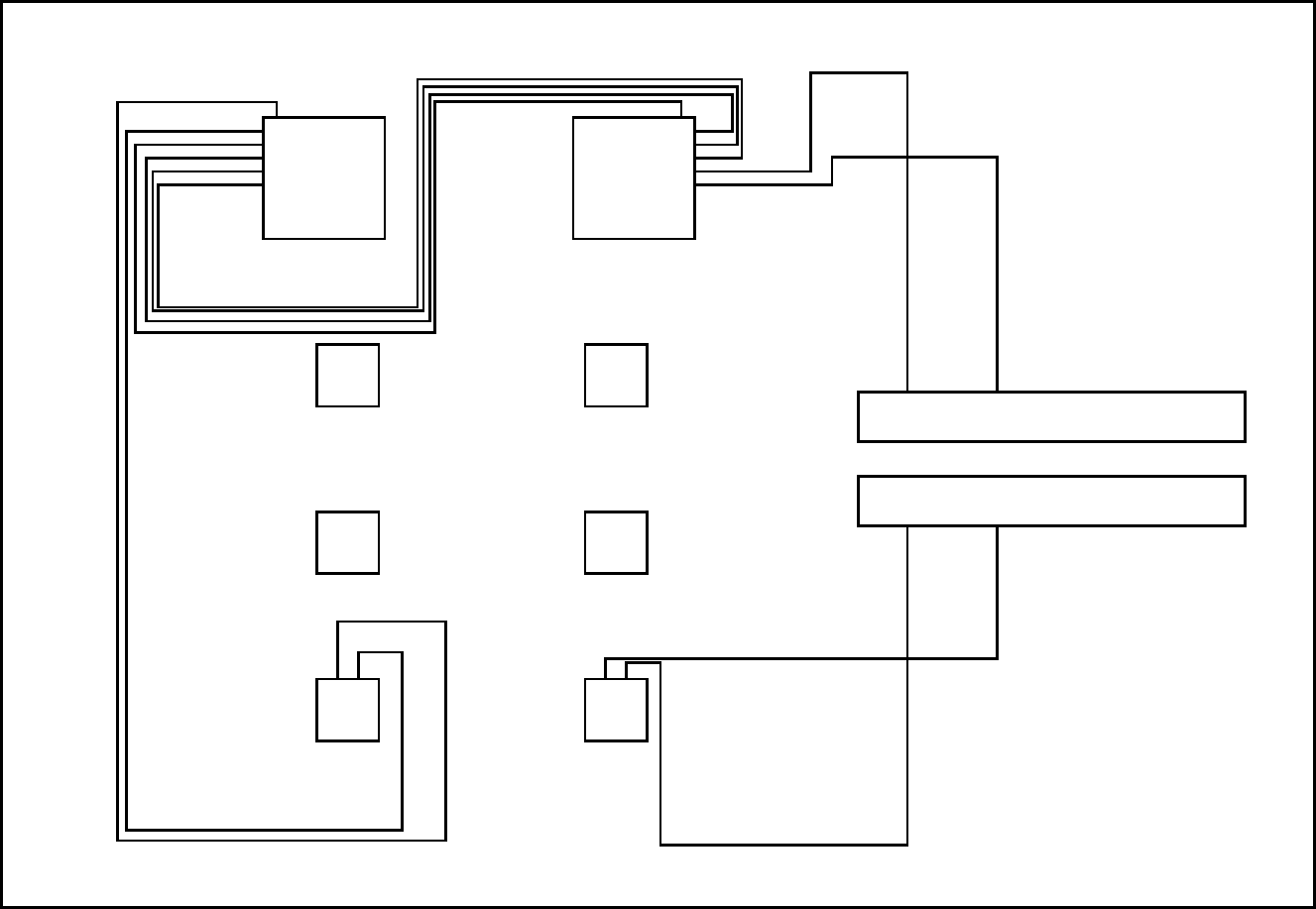}
		\caption{Circles 14 and 19.}
		\label{pc-10-133-14-19}
	\end{subfigure}\hspace{.15cm}
	\begin{subfigure}{0.23\textwidth}
		\labellist
		\small\hair 2pt
		\pinlabel 15 at 310 175
		\endlabellist
		\includegraphics[width=\textwidth]{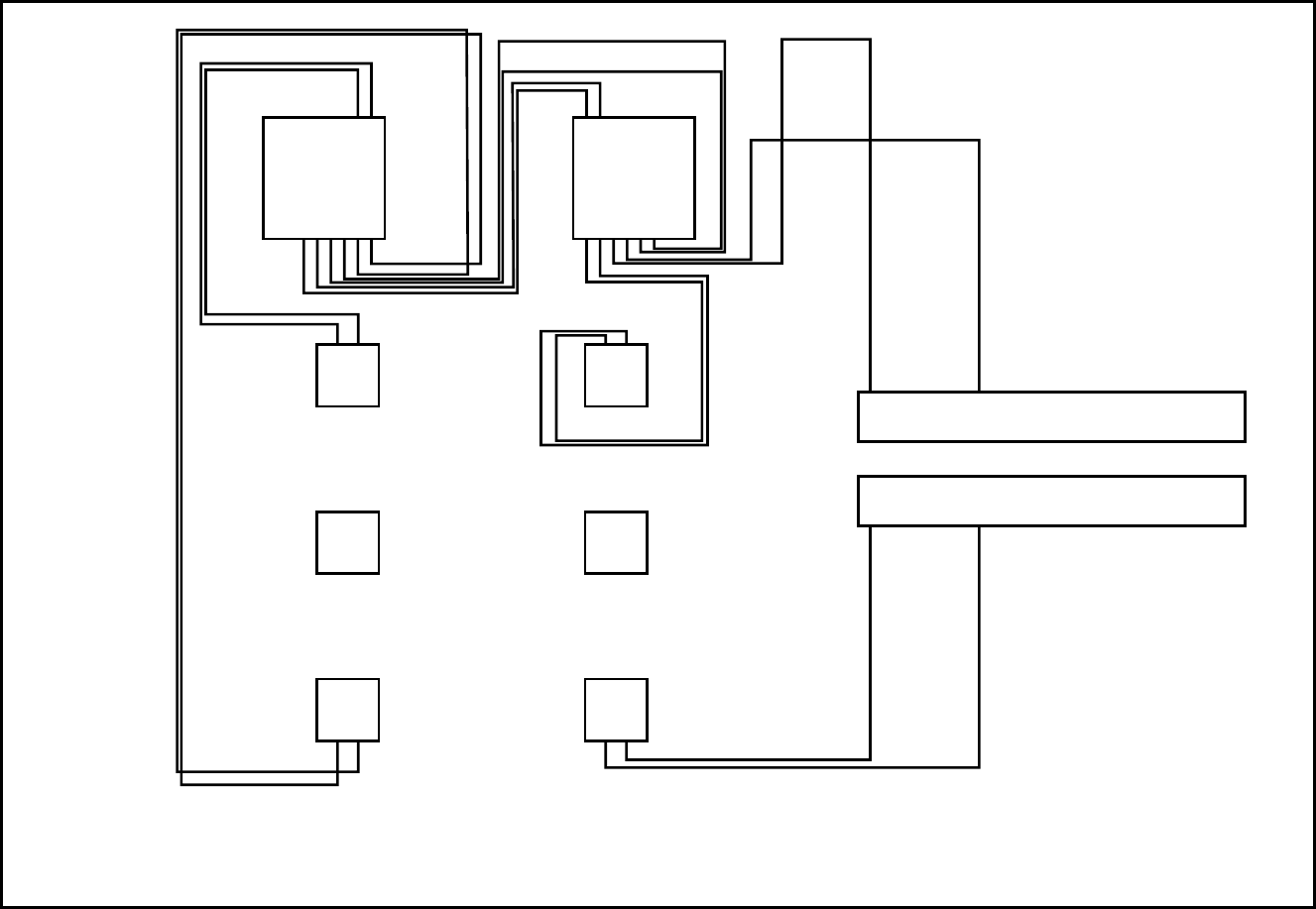}
		\caption{Circles 15 and 21.}
		\label{pc-10-133-15-21}
	\end{subfigure}\hspace{.15cm}
	\begin{subfigure}{0.23\textwidth}
		\labellist
		\small\hair 2pt
		\pinlabel 20 at 345 30
		\endlabellist
		\includegraphics[width=\textwidth]{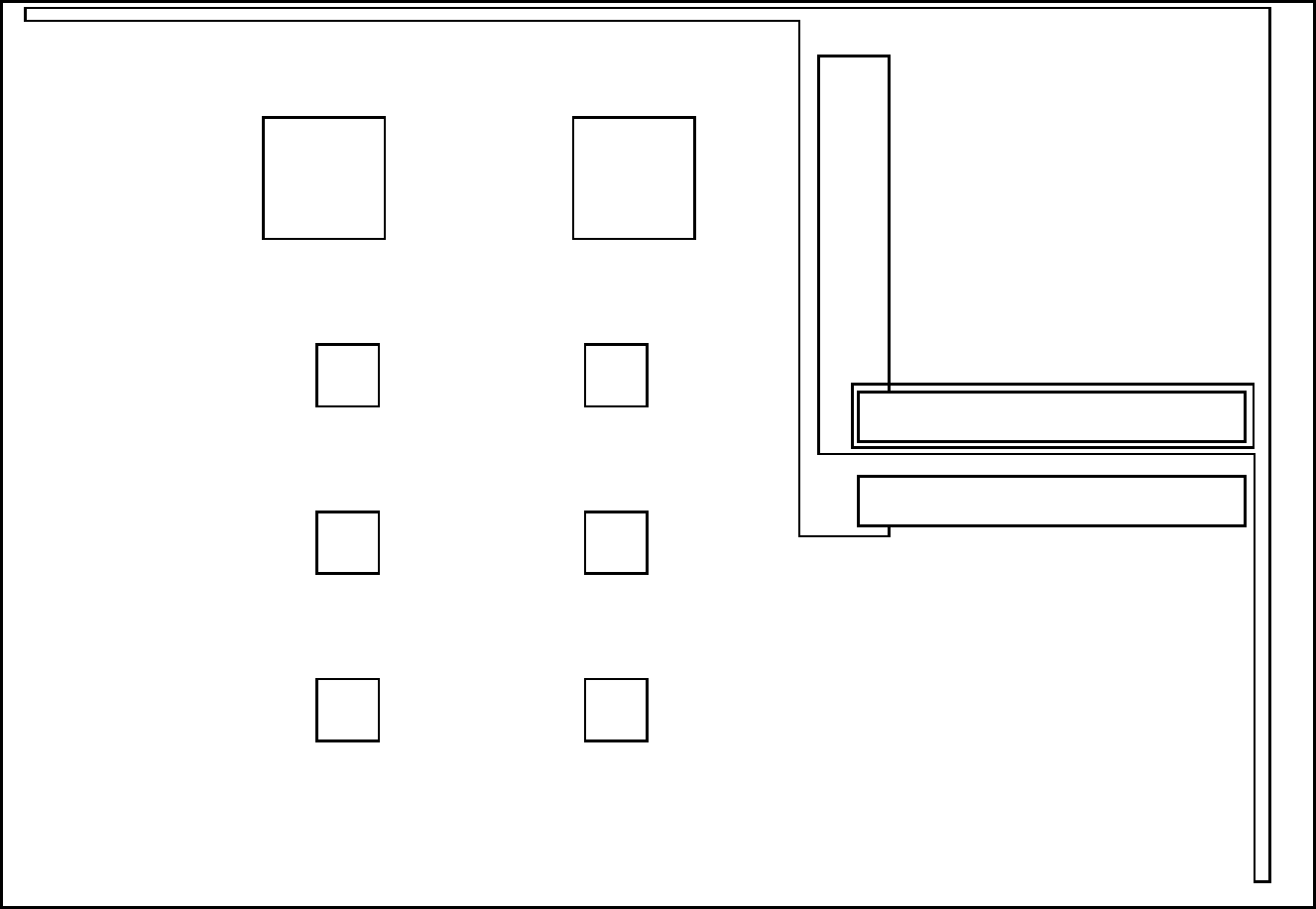}
		\caption{Circles 20 and 22.}
		\label{pc-10-133-20-22}
	\end{subfigure}\hspace{.15cm}
	\begin{subfigure}{0.23\textwidth}
		\includegraphics[width=\textwidth]{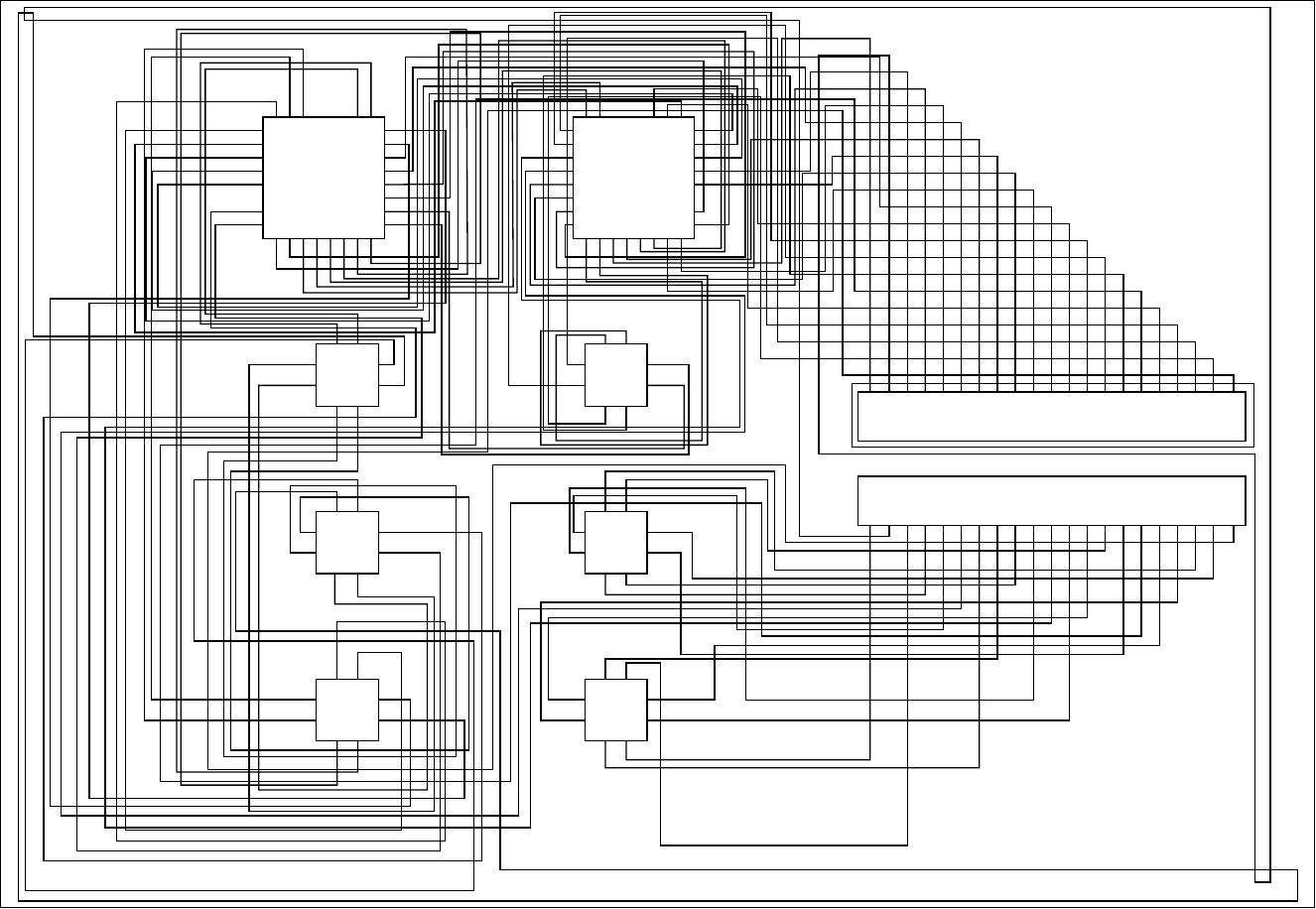}
		\caption{All circles.}
		\label{pc-10-133-all}
	\end{subfigure}
	\caption{A pseudocoronation of $\1$. One of the two circles is labeled in each diagram except the last.\label{pc-10-133}}
\end{figure}
\subsection{Equations and calculations}The number of regions in the pseudocoronations given by Figures \ref{pc-7-6} and \ref{pc-10-133} is large, and each region has multiple equations coming from the different choices of labeling. For this reason, there is software written by Umur \c{C}ift\c{c}i, called EQMaker, which produces the grading system as a text file, and there is a list of Mathematica commands for extracting from this system a new system of equations which expresses the salient set as a sum of formal parameters, all available at the \MYhref{http://www2.math.binghamton.edu/lib/exe/fetch.php/people/jwilliams/salientsets.zip}{author's website}. The grading systems for these diagrams both have integer solutions, the rank of the system for $\7$ is 21, and the rank of the system for $\1$ is 20. For this reason, the salient sets are not equal, so the corresponding smoothings are not isotopic.


\begin{thebibliography}{9999}
\bibitem[B]{B}S. Behrens, \MYhref{http://dx.doi.org/10.2140/pjm.2013.264.257}{On 4-manifolds, folds and cusps}, \emph{Pac. J. Math} \textbf{264}(2) (2013), 257-306. doi:10.2140/pjm.2013.264.257 \MYhref{https://www.ams.org/mathscinet-getitem?mr=3089398}{MR3089398}

\bibitem[BH]{BH}S. Behrens and K. Hayano, \MYhref{https://doi.org/10.1112/plms/pdw042}{Elimination of cusps in dimension 4 and its applications}, \emph{Proc. London Math Soc.} {\bf 113} (5), 674-724. doi:10.1112/plms/pdw042 \MYhref{https://www.ams.org/mathscinet-getitem?mr=3570242}{MR3570242}

\bibitem[FS]{FS}R. Fintushel and R. Stern, \MYhref{https://dx.doi.org/10.1007/s002220050268}{Knots, links and 4-manifolds}, \emph{Invent. Math.} \textbf{134} (1998), no. 2, 363-400. doi 10.1007/s002220050268 \MYhref{https://www.ams.org/mathscinet-getitem?mr=1650308}{MR1650308}

\bibitem[GS]{GS}R. Gompf and A. Stipsicz, \MYhref{http://www.ams.org/bookstore-getitem/item=GSM-20}{4-manifolds and Kirby Calculus}, Graduate Studies in Math. \textbf{20}, Amer. Math. Soc., Providence, RI (1999). MR1707327

\bibitem[KMT]{KMT}R. Kirby, P. Melvin, and P. Teichner, \MYhref{www.msp.warwick.ac.uk/gtm/2012/18/gtm-2012-18-009p.pdf}{Cohomotopy sets of 4-manifolds}, \emph{Geom. Topol. Monographs} \textbf{18} (2012), 161-190.

\bibitem[L]{L}Y. Lekili, \MYhref{https://dx.doi.org/10.2140/gt.2009.13.277}{Wrinkled fibrations on near-symplectic manifolds}, \emph{Geom. Topol.} \textbf{13} (2009), 277-318. doi:10.2140/gt.2009.13.277 \MYhref{https://www.ams.org/mathscinet-getitem?mr=2469519}{MR2469519}

\bibitem[Wa]{Wa}G. Wassermann, \MYhref{http://dx.doi.org/10.1007/BF02392016}{Stability of unfoldings in space and time}, \emph{Acta. Math.}, \textbf{135} (1975), 58-128. doi:10.1007/BF02392016

\bibitem[W1]{W1}J. Williams, \MYhref{https://dx.doi.org/10.2140/gt.2010.14.1015}{The $h$-principle for broken Lefschetz fibrations}, \emph{Geom. Topol.} \textbf{14} no. 2 (2010), 1015-1061. doi:10.2140/gt.2010.14.1015 \MYhref{https://www.ams.org/mathscinet-getitem?mr=2629899}{MR2629899}

\bibitem[W2]{W2}J. Williams, \MYhref{https://arxiv.org/abs/1103.6263}{Uniqueness of crown diagrams of smooth $4$-manifolds}, 2011 preprint (submitted).

\bibitem[W3]{W3}J. Williams, \MYhref{https://arxiv.org/abs/1411.1742}{Existence of 2-parameter crossings, with applications}, Geom. Ded. \textbf{207}, 265–286(2020). doi:10.1007/s10711-019-00499-1

\bibitem[W4]{W4}J. Williams, \MYhref{http://arxiv.org/abs/2202.04749}{Depicting a generalized shift move in crown diagrams}, 2020 preprint.

\bibitem[Y]{Y}K-H Yun, \MYhref{http://www.jstor.org/discover/10.2307/20161493}{Twisted fiber sums of Fintushel-Stern's knot surgered 4-manifolds}, \emph{Trans. AMS} \textbf{360}, no. 11 (2008), 5853-5868. doi: 0002-9947(08)04623-0
\end{thebibliography}
\end{document}